\newtheorem{theorem}{Theorem}[section]
\newtheorem{prop}{Proposition}[section]
\newtheorem{lemma}{Lemma}[section]
\newtheorem{corollary}{Corollary}[section]
\numberwithin{equation}{section}
\begin{document}

\begin{center}
{\Large {\bf Convergence of Dirichlet Eigenvalues for Elliptic Systems
on Perturbed Domains}}\\ \vspace{.2in} Justin L. Taylor\\
Department of Mathematics and Statistics\\ Murray State University \\ Murray,
Kentucky, U.S.A.\\ jtaylor52@murraystate.edu
\end{center}

\begin{center}
\begin{tabular}{m{11cm}}
\begin{center}{\small Abstract}\end{center}

 {\footnotesize

 We consider the eigenvalues of an elliptic operator
\begin{center}$(Lu)^{\beta}=-\frac{\partial}{\partial
x_j}\left(a^{\alpha \beta}_{ij}\frac{\partial u^{\alpha}}{\partial
x_i}\right)\hspace{.5in} \beta=1,...,m$\end{center} where
$u=(u^1,...,u^m)^t$ is a vector valued function and $a^{\alpha
\beta}(x)$ are $(n \times n)$ matrices whose elements $a^{\alpha
\beta}_{ij}(x)$ are at least uniformly bounded measurable
real-valued functions such that
\begin{center}$a^{\alpha \beta}_{ij}(x)=a^{\beta \alpha}_{ji}(x)$\end{center} for any
combination of $\alpha, \beta, i,$ and $j$. We assume we have two
non-empty, open, disjoint, and bounded sets, $\Omega$ and
$\widetilde{\Omega}$, in $\mathbb{R}^n$, and add a set
$T_{\varepsilon}$ of small measure to form the domain
$\Omega_{\varepsilon}$. Then we show that as $\varepsilon
\rightarrow 0^+$, the Dirichlet eigenvalues corresponding to the
family of domains $\{\Omega_{\varepsilon}\}_{\varepsilon>0}$
converge to the Dirichlet eigenvalues corresponding to
$\Omega_0=\Omega \cup \widetilde{\Omega}$. Moreover, our rate of
convergence is independent of the eigenvalues. In this paper, we
consider the Lam\'{e} system, systems which satisfy a strong
ellipticity condition, and systems which satisfy a Legendre-Hadamard
ellipticity condition.}
\end{tabular}
\end{center} 
Mathematics Subject Classification Numbers: 35, 43
\newline \noindent Keywords: eigenvalues, elliptic systems, perturbed domains\vspace{.3in}

\section{Introduction}

\indent

There is a great deal of work studying eigenvalues for elliptic
equations, but there seems to be less work on eigenvalues for
elliptic systems. Much of the work on equations requires estimates
for solutions that do not hold for systems. In this paper, we consider the behavior of
eigenvalues for elliptic systems in singularly perturbed domains. We
give a simple characterization of the families of domains that we
can study and this class includes families such as dumbbell
domains formed by connecting two domains by a thin tube. We show that as the measure of the perturbation
shrinks away, the convergence of the eigenvalues is obtained. We
also provide a rate of convergence, which is independent of any
eigenvalue. We make no assumption on the smoothness of the
coefficients and only mild assumptions on the boundary of the
domain.

Studying solutions of elliptic boundary value problems with
Dirichlet or Neumann boundary conditions on domains which can be approximated
by solutions on simpler domains has been an interest for many years, and is still ongoing. The motivation to study such problems is that it is easier to study the spectra on sets with a reduced dimensionality. One may approximate the spectra on these ``fattened'' sets with the spectra on the ``thinner'' sets. Some applications include studying quantum wires, free-electron theory of conjugated molecules, and photonic crystals. For a complete description, see the work of Kuchment \cite{Kuchment}. Recent work by Exner and Post \cite{Exner} study the Neumann Laplacian on manifolds with thin tubes which is related to the theory of quantum graphs.  The Fireman's Pole problem consists of approximating
the resolvents of a bounded set in $\mathbb{R}^3$ by the resolvents
of this set with a cylinder removed. For a complete description, see
Rauch and Taylor \cite{Rauch}. A classic paper by Babuska and
V\'{y}born\'{y} \cite{Babuska} shows continuity of Dirichlet
eigenvalues for elliptic equations under a regular variation of the
domain, but gives no rates of convergence. Dancer \cite{Dancer1},
\cite{Dancer2} considers how perturbing the domain affects the
number of positive solutions for nonlinear equations with Dirichlet
boundary conditions and includes the case where solutions are
eigenfunctions for the Laplacian. Davies \cite{Davies} and Pang
\cite{Pang} study the approximation of Dirichlet eigenvalues and
corresponding eigenfunctions in a domain $\Omega$ by eigenvalues and
eigenfunctions in sets of the form $R(\varepsilon)=\{x\in
\Omega:\textrm{dist}(x,\partial \Omega)\geq \varepsilon\}$. They
each give rates of convergence and their estimates include the case
when the domain is irregularly shaped. The work of Brown, Hislop,
and Martinez \cite{Brown2} provides upper and lower bounds on the
splitting between the first two Dirichlet eigenvalues in a symmetric
dumbbell region with a straight tube. Chavel and Feldman \cite{Chavel} examine eigenvalues on a compact manifold with a small handle and Dirichlet conditions on the ends of the handle. The work of Ann\'{e} and
Colbois \cite{Anne1} examines the behavior of eigenvalues of the
Laplacian on $p$-forms under a singular perturbation obtained by
adding a thin handle to a compact manifold, but requires more
regularity on the eigenfunctions than holds in our setting.

More recent work for Dirichlet conditions includes work by Daners \cite{Daners}, which shows convergence of solutions
to elliptic equations on
sequences of domains. These domains $\Omega_n$ converge to a limit
domain $\Omega$ in the sense of sequences $u_n\in H^1_0(\Omega_n)$
converging to a function $u\in H^1_0(\Omega)$. Also, Burenkov and
Lamberti \cite{Burenkov} prove sharp spectral stability estimates
for higher-order elliptic operators on domains in certain H\"{o}lder
classes in terms of the Lebesgue measure of the symmetric difference
of the different domains. Kozlov \cite{Kozlov} obtains asymptotics of Dirichlet eigenvalues for domains in $\mathbb{R}^n$ for $n \geq 2$ using Hadamard's formula. Grieser and Jerison \cite{Grieser} also give asymptotics for Dirichlet eigenvalues and eigenfunctions, but only on plane domains.

We note here that the results for Neumann eigenvalues may be
different than those for Dirichlet eigenvalues. In fact, a classic
example of Courant and Hilbert $\cite{Courant}$ shows that the
Neumann eigenvalues may not vary continuously as the domain varies.
Their example is constructed by taking the unit square in
$\mathbb{R}^2$ and attaching a thin handle with a proportional
square attached to the other end. They show that if
$\{\lambda_n^{\varepsilon}\}$ and $\{\lambda_n^{0}\}$ are the
Neumann eigenvalues of $-\Delta$ in increasing order including
multiplicities with respect to the unit square and the perturbed
square, then $\lambda_2^{\varepsilon}\rightarrow 0$ as $\varepsilon
\rightarrow 0$, but $\lambda_2^{0}>0$. This example shows that one
needs additional regularity in order to achieve convergence.
Furthermore, Arrieta, Hale, and Han $\cite{Arrieta}$ show that for
this type of domain, $\lambda_m^{\varepsilon}\rightarrow
\lambda_{m-1}^{0}$, as $\varepsilon \rightarrow 0$ for $m\geq 3$.
Another work of Arrieta \cite{Arrieta2} gives rates of convergence
for eigenvalues of the Neumann Laplacian on a dumbbell domain in
$\mathbb{R}^2$ when the tube is more general. Jimbo and Morita
\cite{Jimbo2} study the first $N$ eigenvalues of the Neumann
Laplacian in $N$ disjoint domains connected by thin tubes. They show
that the first $N$ eigenvalues approach zero and the $(N+1)$st
eigenvalue is uniformly bounded away from zero. If $D_1$ and $D_2$ are two disjoint domains, then for
$\{\sigma_k\}=\{\mu_l\}\cup \{\lambda_j\}$, where $\{\mu_l\}$ are
the Neumann eigenvalues of $-\Delta$ in $D=D_1\cup D_2$ and
$\{\lambda_j\}$ are the Dirichlet eigenvalues of $\frac{-d^2}{dx^2}$
in $(-1,1)$, Jimbo \cite{Jimbo1} gives a rate of convergence on the
difference $\sigma_k-\sigma_k^{\varepsilon}$. This work was
generalized to more classes of domains in a more recent work by
Jimbo and Kosugi \cite{Jimbo3}. Also, Brown, Hislop, and Martinez
$\cite{Brown1}$ show that if $\sigma_k\in \{\mu_l\} \backslash
\{\lambda_j\}$ then
$$|\sigma_k-\sigma_k^{\varepsilon}|\leq C\left[\log
\left(\frac{1}{\varepsilon}\right)\right]^{\frac{-1}{2}}~~~~n=2$$
$$|\sigma_k-\sigma_k^{\varepsilon}|\leq C\varepsilon^{\frac{n-2}{2}}~~~~n\geq 3.$$

Here, we aim to provide an outline of the proof. In section 2, we give several definitions and describe the family of
domains for which we can prove the convergence of eigenvalues. We
also describe the well-known construction of eigenvalues and state
our main result. In section 3, we give Theorem \ref{thm2} from Giaquinta
and Modica \cite{Giaquinta}, \cite{Modica} which uses a technique
introduced by Gehring \cite{Gehring}. We also prove a Caccioppoli
type estimate for eigenfunctions in Theorem \ref{thm5} and use this along with Theorem \ref{thm2} to obtain a reverse
H\"{o}lder inequality given in Theorem \ref{revthm1}. This gives $L^p$-integrability for the gradient of the eigenfunctions for $p>2$. In section 4, we are able to bound these $L^p$ norms by a constant in Proposition \ref{prop3}. The proof uses the reverse H\"{o}lder
inequality as the key ingredient. This estimate is then used to prove Lemma \ref{lemma2} and Proposition \ref{prop5}, which are used to satisfy the first part of a well-known theorem from Ann\'{e} \cite{Anne2} given in Lemma \ref{lemma1}. The second part of Lemma \ref{lemma1} follows from the first part along with the above estimates, thus giving Corollary \ref{cor1}. The main result follows from this corollary. As
a by-product of our research, we give a simple proof of Shi and
Wright's \cite{Shi} $L^p$-estimates for the gradient of the Lam\'{e}
system as well as other elliptic systems. Many of the results first appeared in the author's Ph.D. dissertation \cite{Taylor}. \vspace{2cm}

\section{Preliminaries and Main Result}

\indent

We give conditions on a family of domains $\Omega_{\varepsilon}$
that allow us to prove the convergence of eigenvalues. We let
$\Omega$ and $\widetilde{\Omega}$ in $\mathbb{R}^n$ be two
non-empty, open, disjoint, and bounded sets. We let
$\varepsilon_1>0$ (which will be chosen small later), and then let
$\{T_{\varepsilon}\}_{0<\varepsilon\leq\varepsilon_1}$ be a family
of open sets such that $$T_{\widetilde{\varepsilon}}\subset
T_{\varepsilon}~~~~\textrm{if}~~\widetilde{\varepsilon}\leq
\varepsilon$$ and if $|T_{\varepsilon}|$ denotes the Lebesgue
measure of $T_{\varepsilon}$, then
\begin{equation} \label{eqn12} |T_{\varepsilon}|\leq
C\varepsilon^{d}
\end{equation} where $C$ and $0<d\leq n$ are independent of $\varepsilon$. Fix two points $p_1$ and $p_2$ on $\partial \Omega$ and
$\partial \widetilde{\Omega}$, respectively. For each $\varepsilon$, let $B_{\varepsilon}$ and
$\widetilde{B}_{\varepsilon}$ be two balls of radius
$\varepsilon$ in $\mathbb{R}^n$ centered at $p_1$ and $p_2$, respectively. The connections from $T_{\varepsilon}$ to $\Omega$ and
$\widetilde{\Omega}$ will be contained in $B_{\varepsilon}$ and
$\widetilde{B}_{\varepsilon}$, so that $T_{\varepsilon}\cap
\Omega=\emptyset$ and $\overline{T_{\varepsilon}}\cap
\overline{\Omega}\subset B_{\frac{\varepsilon}{2}}$ where
$B_{\frac{\varepsilon}{2}}$ is the concentric ball to
$B_{\varepsilon}$ of radius $\frac{\varepsilon}{2}$. Also, suppose a
similar condition for $\widetilde{\Omega}$ and
$\widetilde{B}_{\varepsilon}$. Then for any $\varepsilon$, define
$\Omega_{\varepsilon}$ to be the set $\Omega \cup
 \widetilde{\Omega} \cup T_{\varepsilon}$, which we assume to be
 open and connected,
and $\Omega_0=\Omega \cup \widetilde{\Omega}$. So, if our family is the family of dumbbell domains, you may think of
$T_{\varepsilon}$ as a ``tube'' connecting each of the two domains.
We now have the family of domains
$\{\Omega_{\varepsilon}\}_{0\leq\varepsilon\leq\varepsilon_1}$.

 \indent Next, we give a condition on the boundary of
$\Omega_{\varepsilon}$. If $B_r$ is any ball of radius $r$
satisfying $B_{r}\cap \Omega_{\varepsilon}^c \neq \emptyset$, then
\begin{equation} \label{eqn11} |B_{2r}\cap \Omega_{\varepsilon}^c| \geq C_0r^n\end{equation} where $C_0$ is a constant independent of $r$ and $\varepsilon$. This eliminates domains with ``cracks'' and ``in-cusps,'' and will be used to help show the Caccioppoli inequality in Theorem \ref{thm5} for the case when we are close to the boundary.

Throughout this paper we use the convention of summing over repeated
indices, where $i$ and $j$ will run from 1 to $n$ and $\alpha$,
$\beta$, and $\gamma$ will run from 1 to $m$. We let $a^{\alpha
\beta}_{ij}(x)$ be bounded, measurable, real-valued functions on
$\mathbb{R}^n$ which satisfy the symmetry condition
$$a^{\alpha \beta}_{ij}(x)=a^{\beta
\alpha}_{ji}(x),~~~i,j=1,2,...,n,~~~\alpha,\beta=1,2,...,m.$$ We let
$L^2(\Omega_{\varepsilon})$ denote the space of square integrable
functions taking values in $\mathbb{R}^m$ and
$H^1_0(\Omega_{\varepsilon})$ denotes the Sobolev space of vector-valued functions having one derivative in
$L^2(\Omega_{\varepsilon})$ and which vanish on the boundary. We use
$u^{\alpha}_j$ to denote the partial derivative $\frac{\partial
u^{\alpha}}{\partial x_j}$.

Let $\eta_{\varepsilon} \in C^{\infty}_c(\mathbb{R}^n)$ be a cutoff
function so that $\eta_{\varepsilon}=0$ in $T_{\varepsilon}$,
$\eta_{\varepsilon}=1$ in $\Omega_0 \backslash (B_{\varepsilon}\cup
\widetilde{B_{\varepsilon}})$, $|\nabla \eta_{\varepsilon}|\leq
\frac{C_n}{\varepsilon}$, and $0\leq \eta_{\varepsilon}\leq1$, where
$C_n$ only depends on $n$. We emphasize that $B_{\varepsilon}$,
$\widetilde{B_{\varepsilon}}$, and $\eta_{\varepsilon}$ depend on
the parameter $\varepsilon$. With these assumptions and definitions,
we have that for any $u\in H_0^1(\Omega_{\varepsilon})$,
$\eta_{\varepsilon} u$ will be in $H_0^1(\Omega_{0})$.

We now introduce the notion of an eigenvalue and corresponding
eigenvector. We say that the number $\sigma$ is a Dirichlet
eigenvalue of $L$ with Dirichlet eigenfunction $ u \in
H^1_0(\Omega)$, if $u\neq 0$ and
\begin{equation}\label{meqn40} \int_{\Omega}a^{\alpha
\beta}_{ij}(x)u^{\alpha}_i(x)\phi^{\beta}_j(x)~dx=\sigma
\int_{\Omega}u^{\gamma}(x)\phi^{\gamma}(x)~dx, \hspace{.5in} for
~any ~\phi \in H^1_0(\Omega). \end{equation}

\noindent We say that $L$ satisfies the {\it Legendre-Hadamard
condition} if there exists $\theta >0$ so that
\begin{equation} \label{eqn1aa} a^{\alpha
\beta}_{ij}(x)\xi_{\alpha}\xi_{\beta}\psi_i\psi_j\geq \theta
|\xi|^2|\psi|^2,~~~\xi\in\mathbb{R}^{m},~~~\psi\in\mathbb{R}^{n},~~~a.e.~x\in
\Omega_{\varepsilon}.
\end{equation}
If we define the norm on matrices $A=A_{ij}\in \mathbb{R}^{m\times
n}$ as $|A|^2=\displaystyle \sum_{i=1}^m\sum_{j=1}^n|A_{ij}|^2$, and
$L$ satisfies the {\it Legendre-Hadamard condition} with continuous
coefficients in $\overline{\Omega}$, then it is well-known that for
any $u\in H^1_0(\Omega)$, we have G{\aa}rding's inequality
\cite[p.~347]{Treves}
\begin{equation} \label{eqn1111} C_1\int_{\Omega}|\nabla u|^2~dx\leq
\int_{\Omega}a^{\alpha
\beta}_{ij}(x)u^{\alpha}_i(x)u^{\beta}_j(x)~dx+C_2\int_{\Omega}|u|^2~dx.
\end{equation}
\noindent $L$ is said to satisfy a {\it strong Legendre
condition} or a {\it strong ellipticity condition} if there exists
$\theta >0$ so that
\begin{equation} \label{eqn1} a^{\alpha
\beta}_{ij}(x)\xi^{\alpha}_i\xi^{\beta}_j\geq \theta
|\xi|^2,~~\xi\in\mathbb{R}^{m\times n},~~~a.e.~x\in
\Omega_{\varepsilon}.
\end{equation}

We introduce the Lam\'{e} system as $Lu=-\textrm{div}\zeta(u)$,
where $\zeta(u)$ denotes the stress tensor defined by
\begin{equation} \label{eqn2} \zeta^{\beta}_j(u):=a^{\alpha
\beta}_{ij}u^{\alpha}_i\end{equation} which is defined in terms of
the Lam\'{e} moduli $\upsilon(x)$ and $\mu(x)$ by
\begin{equation} \label{eqn3}a^{\alpha \beta}_{ij}(x)=\upsilon(x)
\delta_{i\alpha} \delta_{j \beta}+\mu(x)\delta_{ij} \delta_{\alpha
\beta}+\mu(x)\delta_{i\beta} \delta_{j \alpha},\end{equation} where $\upsilon(x)$ and $\mu(x)$ are both assumed to be bounded and measurable. Also,
define the strain tensor $\kappa(u)$ as
\begin{equation}\label{eqn4}
\kappa_{ij}(u):=\frac{1}{2}\left(u^i_j+u^j_i\right).\end{equation}
Note that for the Lam\'{e} system, $m=n$ and the Lam\'{e} parameters
$\upsilon(x)$ and $\mu(x)$ given in ($\ref{eqn3}$) satisfy
the conditions
\begin{equation} \label{eqn5}\upsilon(x)\geq0~~~~~~~~\mu(x)\geq
\delta >0.\end{equation}
With these assumptions, the Lam\'{e} system satisfies the ellipticity condition
\begin{equation} \label{eqn6}a^{\alpha
\beta}_{ij}u_i^{\alpha}u_j^{\beta}\geq
\tau\left|\kappa(u)\right|^2,~~~u\in H^1_0(\Omega_{\varepsilon})
\end{equation}
where $\tau=2\delta$. With Korn's 1st Inequality, it is easy to see that
for the Lam\'{e} system, we have
$$\frac{\tau}{2} \int_{\Omega_{\varepsilon}}|\nabla u|^2~dy \leq \int_{\Omega_{\varepsilon}}a^{\alpha
\beta}_{ij}u_i^{\alpha}u_j^{\beta}~dy,~~~~u\in
H^1_0(\Omega_{\varepsilon}).$$ Thus, if u satisfies either the
ellipticity condition (\ref{eqn1}), (\ref{eqn6}), or (\ref{eqn1aa})
with continuous coefficients in $\overline{\Omega}$, then we have
G{\aa}rding's inequality (\ref{eqn1111}).

The well-known construction of eigenvalues and eigenfunctions for scalar functions (which is the same for vector-valued functions) is taken from
Gilbarg and Trudinger \cite[p.~212]{Gilbarg}. If
we define the bilinear form on $H^1_0(\Omega_{\varepsilon})\times
H^1_0(\Omega_{\varepsilon})$ as
\begin{equation} \label{eqn7}
B_{\varepsilon}(u,v):=\int_{\Omega_{\varepsilon}}a^{\alpha
\beta}_{ij}u_i^{\alpha}v_j^{\beta}~dx \end{equation} and define the
Rayleigh quotient $R_{\varepsilon}$ as
\begin{equation} \label{eqn8}
R_{\varepsilon}(u):=\frac{B_{\varepsilon}(u,u)}{\|u\|_{L^2(\Omega_{\varepsilon})}^2}
 \end{equation} for $u\neq 0$, then we can construct an increasing
sequence of eigenvalues, listed according to multiplicity,
$\{\sigma_k\}_{k=1}^{\infty}$ such that for each corresponding
eigenfunction $u_k \in H^1_0(\Omega_{\varepsilon})$, we have
\begin{equation} \label{eqn9} \min_{w\in
\{u_1,...,u_{k-1}\}^{\perp}}R_{\varepsilon}(w)=R_{\varepsilon}(u_k)=\sigma_k\end{equation}
and
\begin{equation} \label{eqn10}
\|u_k\|_{L^2(\Omega_{\varepsilon})}=1 \end{equation}for any $k$.
\noindent Furthermore, each eigenspace is finite-dimensional and the
constructed set of eigenfunctions forms an orthonormal basis in
$L^2(\Omega_{\varepsilon})$.

 We now state the main result.
\begin{theorem} \label{thm1} Let $$(Lu)^{\beta}=-\frac{\partial}{\partial x_j}\left(a^{\alpha \beta}_{ij}\frac{\partial u^{\alpha}}{\partial x_i}\right)\hspace{.5in} \beta=1,...,m$$ satisfy one of the following:
\begin{enumerate}
\item $L$ has uniformly bounded coefficients and satisfies either the
ellipticity condition (\ref{eqn1}) or the ellipticity condition
(\ref{eqn6}).
\item $L$ has continuous coefficients and satisfies the
ellipticity condition (\ref{eqn1aa}). \end{enumerate} Also assume
$\{\sigma_k^0\}_{k=1}^{\infty}$ and
$\{\sigma_k^{\varepsilon}\}_{k=1}^{\infty}$ are the Dirichlet
eigenvalues of $L$ with respect to $\Omega_0$ and
$\Omega_{\varepsilon}$ in increasing order numbered according to
multiplicity. Then for each $J\in \mathbb{N}$, we have the following
estimate: $$|\sigma_J^{\varepsilon}-\sigma_J^0|\leq
C\varepsilon^{a}$$ for $0<\varepsilon\leq \varepsilon_0(J)$, where
$\varepsilon_0(J)$ depends on the multiplicity of $\sigma_J^0$.
Moreover, the rate $a>0$ is independent of any eigenvalue and $C$
only depends on the eigenvalue $\sigma_J^0$ and the distance from $\sigma_J^0$ to
nearby eigenvalues.

\end{theorem}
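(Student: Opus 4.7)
The plan is to use the min-max characterization (\ref{eqn9}) together with the cutoff $\eta_\varepsilon$ to relate the Rayleigh quotients on $\Omega_\varepsilon$ and $\Omega_0$. Since $\Omega_0 \subset \Omega_\varepsilon$, extending any $u \in H_0^1(\Omega_0)$ by zero to $\Omega_\varepsilon$ gives $\sigma_J^\varepsilon \leq \sigma_J^0$ by domain monotonicity, so the real content of the theorem is the quantitative upper bound $\sigma_J^0 - \sigma_J^\varepsilon \leq C \varepsilon^a$.

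To obtain this direction, I take the first $J$ orthonormal eigenfunctions $u_1^\varepsilon, \dots, u_J^\varepsilon$ for $\Omega_\varepsilon$ and transplant them into $H_0^1(\Omega_0)$ via $v_k := \eta_\varepsilon u_k^\varepsilon$. Applying (\ref{eqn9}) to the subspace spanned by $v_1, \dots, v_J$ and bounding $\sigma_J^0$ by the largest Rayleigh quotient on this span reduces matters to the two approximate identities
\begin{equation*}
B_0(v_k, v_l) = B_\varepsilon(u_k^\varepsilon, u_l^\varepsilon) + O(\varepsilon^a), \qquad \langle v_k, v_l \rangle_{L^2(\Omega_0)} = \delta_{kl} + O(\varepsilon^a),
\end{equation*}
where $B_0$ is the bilinear form on $\Omega_0$. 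These are exactly the quantities that Lemma \ref{lemma2}, Proposition \ref{prop5}, and ultimately Corollary \ref{cor1} (obtained from Ann\'e's Lemma \ref{lemma1}) are designed to estimate.

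Expanding $\nabla v_k = \eta_\varepsilon \nabla u_k^\varepsilon + u_k^\varepsilon \nabla \eta_\varepsilon$ and using that $\eta_\varepsilon \equiv 1$ on $\Omega_0 \setminus (B_\varepsilon \cup \widetilde B_\varepsilon)$ and vanishes on $T_\varepsilon$, the defects in the two identities are controlled by
\begin{equation*}
I_1 := \int_{B_\varepsilon \cup \widetilde B_\varepsilon \cup T_\varepsilon} |\nabla u_k^\varepsilon|^2 \, dx, \qquad I_2 := \int |\nabla \eta_\varepsilon|^2 |u_k^\varepsilon|^2 \, dx,
\end{equation*}
together with the $L^2$ mass of $u_k^\varepsilon$ over $B_\varepsilon \cup \widetilde B_\varepsilon \cup T_\varepsilon$. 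The crucial input is the uniform estimate $\|\nabla u_k^\varepsilon\|_{L^p(\Omega_\varepsilon)} \leq C$ for some fixed $p>2$ supplied by Proposition \ref{prop3}, which in turn rests on the Caccioppoli estimate (Theorem \ref{thm5}) fed into the Giaquinta--Modica--Gehring self-improvement (Theorem \ref{thm2}) to produce the reverse H\"older inequality (Theorem \ref{revthm1}). Combined with $|B_\varepsilon| \leq C \varepsilon^n$, $|T_\varepsilon| \leq C \varepsilon^d$, and $|\nabla \eta_\varepsilon| \leq C_n/\varepsilon$, H\"older's inequality yields $I_1 \leq C \varepsilon^{d(1-2/p)}$; for $I_2$, a Sobolev embedding applied to $u_k^\varepsilon$ absorbs the factor $\varepsilon^{-2}$ while the small measure of $B_\varepsilon \cup \widetilde B_\varepsilon$ still leaves a positive power of $\varepsilon$.

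The main obstacle is passing from the near-equality of Rayleigh quotients to a comparison of individually labeled eigenvalues. Because the transplanted vectors $v_1, \dots, v_J$ are only approximately orthonormal, simultaneously diagonalizing the Gram matrix and the stiffness matrix on their span introduces perturbative corrections that must be absorbed into $C \varepsilon^a$, at the cost of a constant that deteriorates when $\sigma_J^0$ has high multiplicity or lies close to neighboring eigenvalues. This standard but delicate bookkeeping is exactly why $\varepsilon_0(J)$ depends on the multiplicity of $\sigma_J^0$ and why $C$ depends on the distance from $\sigma_J^0$ to the rest of the spectrum; once it is carried out, combining the resulting upper bound with the trivial monotonicity inequality delivers the two-sided estimate $|\sigma_J^\varepsilon - \sigma_J^0| \leq C \varepsilon^a$ with $a := d(1-2/p)/2 > 0$ independent of $J$.
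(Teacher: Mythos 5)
Your proposal shares the paper's technical core --- the Caccioppoli inequality (Theorem \ref{thm5}) fed into the Gehring--Giaquinta--Modica lemma (Theorem \ref{thm2}) to get the reverse H\"older inequality (Theorem \ref{revthm1}), hence the uniform $L^{\widetilde p}$ gradient bound of Proposition \ref{prop3}, the cutoff $\eta_\varepsilon$, and the near-orthonormality of the transplanted eigenfunctions --- but it finishes differently. You close with the classical Courant--Fischer argument: the span of $\eta_\varepsilon u_1^\varepsilon,\dots,\eta_\varepsilon u_J^\varepsilon$ is a $J$-dimensional trial subspace of $H_0^1(\Omega_0)$ on which the Rayleigh quotient is at most $\sigma_J^\varepsilon+C\varepsilon^a$, giving $\sigma_J^0\le\sigma_J^\varepsilon+C\varepsilon^a$ directly, which with the monotonicity $\sigma_J^\varepsilon\le\sigma_J^0$ (Proposition \ref{prop2}) yields the theorem for every $J$ without induction. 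The paper instead routes the same estimates (Lemma \ref{lemma2} as an approximate-eigenfunction statement, Proposition \ref{prop5} for almost-orthonormality) through Ann\'e's spectral-projector lemma (Lemma \ref{lemma1}), Gram--Schmidt, Corollary \ref{cor1}, and an induction over distinct eigenvalues. Your route is shorter and more elementary for the eigenvalue estimate alone; the paper's route additionally delivers convergence of the spectral projectors (Corollary \ref{cor1}) and an exact count of the $\varepsilon$-eigenvalues clustering at $\sigma_J^0$, which is where the dependence of $C$ on the spectral gap genuinely enters. (In your version $\varepsilon_0$ depends on the index $J$ through the invertibility of the Gram matrix rather than only on the multiplicity, which is a harmless difference from the stated theorem.)

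One step is stated too loosely: for $I_2=\int|\nabla\eta_\varepsilon|^2|u_k^\varepsilon|^2$, the ordinary Sobolev embedding $H^1\hookrightarrow L^{2n/(n-2)}$ gives only $\int_{B_\varepsilon\cup\widetilde B_\varepsilon}|u_k^\varepsilon|^2\le C\varepsilon^2$, so $I_2=O(1)$, not $O(\varepsilon^a)$. To get a positive power of $\varepsilon$ you must either use the Poincar\'e inequality on $B_\varepsilon$ (exploiting that $u_k^\varepsilon$ vanishes on a set of density $C_0$ in $B_{2\varepsilon}$ by (\ref{eqn11})) together with the $L^{\widetilde p}$ gradient bound, as the paper does in Lemma \ref{lemma2}, or use the improved embedding $W^{1,\widetilde p}\hookrightarrow L^q$ with $q>2n/(n-2)$, which again rests on Proposition \ref{prop3}. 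Either fix is available to you, so this is a presentational gap rather than a fatal one.
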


\section{A Reverse H\"{o}lder Inequality}
\indent

 If $\displaystyle-\!\!\!\!\!\!\int_{E}|f(y)|~dy$ is defined
to be the average of $f$ on $E$, then recall that the maximal
function is defined for $f\in L^1_{loc}(\mathbb{R}^n)$ to be
$$M(f)(x):=\sup_{r>0}-\!\!\!\!\!\!\int_{B_r(x)}|f(y)|~dy$$ where
$B_r(x)$ is a ball of radius $r$ centered at $x$. Also, define
$M_R(f)(x)$ to be
$$M_R(f)(x):=\sup_{R>r>0}-\!\!\!\!\!\!\int_{B_r(x)}|f(y)|~dy.$$
We will need the following theorem from Giaquinta
\cite[p.~122]{Giaquinta}, which uses the technique introduced by
Gehring \cite{Gehring},  and refined by Giaquinta and Modica
\cite{Modica}.
\begin{theorem} \label{thm2} Let $r>q>1$, and $Q_R$ be a cube in
$\mathbb{R}^n$ with sidelength $R$ centered at 0. Also, define
$d(x)=dist(x,\partial Q_R)$. If $f$ and $g$ are measurable functions such that $f\in L^r(Q_R)$, $g\in L^q(Q_R)$, $f=g=0$
outside $Q_R$, and with the added condition that
$$M_{\frac{d(x)}{2}}(|g|^q)(x)\leq bM^q(g)(x)+M(|f|^q)+aM(|g|^q)(x)$$
for almost every $x$ in $Q_R$ where $b\geq0$ and $0\leq a<1$, then $g
\in L^p(Q_{\frac{R}{2}})$, for $p \in[q,q+\epsilon)$ and
\begin{equation} \label{meqn2} \left(-\!\!\!\!\!\!\int_{Q_{R/2}}|g|^p(y)~dy\right)^{\frac{1}{p}}\leq
C\left[\left(-\!\!\!\!\!\!\int_{Q_{R}}|g|^q(y)~dy\right)^{\frac{1}{q}}+\left(-\!\!\!\!\!\!\int_{Q_{R}}|f|^p(y)~dy\right)^{\frac{1}{p}}\right]\end{equation}
where $\epsilon$ and $C$ depend on $b,q,n,a$ and $r$.
\end{theorem}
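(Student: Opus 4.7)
The plan is to follow the standard Gehring--Giaquinta--Modica higher integrability argument, in which the pointwise reverse maximal inequality is upgraded to a distribution function (good-$\lambda$) inequality via a Calder\'on--Zygmund stopping time, and then integrated against a suitable power of the level.

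First I would reduce to a statement purely about the function $h=|g|^q$ and the forcing $k=|f|^q$. By Jensen's inequality $[M(g)(x)]^q\leq M(|g|^q)(x)$, so the hypothesis implies a cleaner pointwise bound of the form
\begin{equation*}
M_{d(x)/2}(h)(x)\leq a'\,M(h)(x)+M(k)(x),\qquad a'=a+b,
\end{equation*}
where, after an initial adjustment, we arrange $a'<1$ (this normalization is where a plays its essential role). Next I would fix a base level $t_0=-\!\!\!\!\!\!\int_{Q_R}h$ and, for each $t>t_0$, run a dyadic Calder\'on--Zygmund decomposition of $h$ at level $t$ inside $Q_{R/2}$, producing a family of disjoint maximal cubes $\{Q_i^t\}$ with $t<-\!\!\!\!\!\!\int_{Q_i^t}h\leq 2^n t$. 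The $d(x)/2$ truncation in the hypothesis is exactly what guarantees that the relevant doubled cubes $2Q_i^t$ remain inside $Q_R$, so that the localized maximal bound is usable on each stopping cube.

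Then on each stopping cube I would split $M(h)=M(h\chi_{\{h\leq \eta t\}})+M(h\chi_{\{h>\eta t\}})\leq \eta t+M(h\chi_{\{h>\eta t\}})$, apply the hypothesis together with the weak-type $(1,1)$ bound for $M$, and choose $\eta$ so that the $\eta t$ contribution can be absorbed via $a'<1$. Summing over the disjoint stopping cubes yields the distribution-function inequality
\begin{equation*}
\int_{\{h>Kt\}\cap Q_{R/2}}h\,dx\;\leq\;C\int_{\{h>\eta t\}\cap Q_R}h\,dx \;+\;C\int_{Q_R}k\,dx,
\end{equation*}
valid for all $t>t_0$ and a fixed constant $K>1$. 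Multiplying by $t^{\delta-1}$, integrating in $t$, and applying Fubini converts this into an inequality of the form $\|h\|_{L^{1+\delta}(Q_{R/2})}^{1+\delta}\leq C\eta^{-\delta}\|h\|_{L^{1+\delta}(Q_{R/2})}^{1+\delta}+\text{(controlled terms in }h\text{ and }k\text{)}$; choosing $\delta$ small enough that $C\eta^{-\delta}<1$ lets us absorb the bad term onto the left. Rewriting $h=|g|^q$, $k=|f|^q$ and $p=q(1+\delta)$ yields (\ref{meqn2}), with $\epsilon=q\delta$ and $C$ depending only on $b,q,n,a,r$.

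The main obstacle is quantitative: one must simultaneously (i) choose the stopping cubes small enough that the $d(x)/2$--truncated maximal operator really controls their averages, (ii) choose $\eta$ small enough for the absorption in the stopping cube estimate, and (iii) choose $\delta$ small enough for the absorption after integration. The hypothesis $r>q$ enters because the term involving $f$ must be integrable to a power strictly bigger than $q$, so that the forcing contribution in the integrated distribution inequality is finite even after a small increase in the exponent, and the interplay $a<1$ is what keeps the absorption constants strictly less than one through the whole chain.
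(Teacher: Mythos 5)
The paper does not actually prove Theorem \ref{thm2}; it is quoted from Giaquinta \cite{Giaquinta} and Giaquinta--Modica \cite{Modica}, so your sketch must be measured against the standard Gehring-type argument, which is indeed the one you outline (Calder\'on--Zygmund stopping time, distribution-function inequality, integration in the level $t$, absorption for small $\delta$). The skeleton, the role you assign to the truncation $M_{d(x)/2}$, and your explanation of why $r>q$ is needed are all correct.

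The genuine gap is your opening reduction. You apply Jensen to replace $bM^q(g)$ by $bM(|g|^q)$ and assert that ``after an initial adjustment'' one can arrange $a'=a+b<1$. No such adjustment exists: the theorem allows arbitrary $b\geq0$ with only $a<1$, and both sides of the hypothesis are homogeneous of the same degree in $g$, so no rescaling shrinks $b$. The two terms play structurally different roles. The term $aM(|g|^q)$ has exactly the strength of the left-hand side, which is why its coefficient must be below $1$ to be absorbed on the stopping cubes. The term $bM^q(g)$ is the $q$-th power of a \emph{first-power} average of $|g|$; on a stopping cube selected at level $t^q$ for $|g|^q$, one splits $g$ itself (not $|g|^q$) at height $\eta t$, obtaining
$$\Bigl(\frac{1}{|2Q_i|}\int_{2Q_i}g\Bigr)^q\leq C_q\Bigl[\eta^qt^q+\Bigl(\frac{1}{|2Q_i|}\int_{2Q_i\cap\{g>\eta t\}}g\Bigr)^q\Bigr],$$
and it is the factor $\eta^q$, with $\eta$ chosen small depending on $b$, that defeats a large $b$, while the truncated piece gains a power of the measure of $\{g>\eta t\}$ by H\"older. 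Your later splitting of $M(h)=M(|g|^q)$ at level $\eta t$ cannot recover this, because by that point the $b$ term has already been merged into the coefficient of $M(h)$. The loss is not cosmetic for this paper: in the application (Theorem \ref{revthm1}) the constant $b$ is the Caccioppoli constant $C_1$ from (\ref{meqn5}), which is not assumed small --- only $C_3<1$ is guaranteed --- so a version of the lemma requiring $a+b<1$ would not cover the case actually used.
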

The conclusion of this theorem is known as a reverse H\"{o}lder
inequality. To show that the gradient of eigenfunctions satisfy this
inequality, we will need to prove a Caccioppoli inequality. However,
to show this Caccioppoli inequality, we first need the following two
well-known inequalities taken from Hebey \cite[p.~44]{Hebey} and
Oleinik \cite[p.~27]{Oleinik}:
\begin{theorem} \label{thm3} {\bf Sobolev-Poincar\'{e} Inequality} Let $1\leq p<n$
and $\frac{1}{q}=\frac{1}{p}-\frac{1}{n}$. Also, let $B_r$ be any
ball of radius $r$ with $u\in W^{1,p}(B_r)$. Then, for $S$ contained
in $B_r$ with $|S|\geq c_0r^n$,
\begin{equation} \label{meqn3} \int_{B_r}|u(x)-u_{S}|^q~dx \leq C\left(\int_{B_r}|\nabla
u|^p(x)~dx\right)^{\frac{q}{p}}\end{equation} where
$u_S=\frac{1}{|S|}\int_Su~dy$, for some constant $C(n,p,c_0)$,
independent of $u$.
\end{theorem}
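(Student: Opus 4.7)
The plan is to reduce this statement to the classical Sobolev-Poincaré inequality, which asserts the same bound with $u_S$ replaced by the full average $u_{B_r} = |B_r|^{-1}\int_{B_r} u\,dy$. That classical version is standard (see, e.g., the reference to Hebey), so the only real work is to show that replacing $u_{B_r}$ by $u_S$ costs at most a multiplicative constant depending on $n$, $p$, and $c_0$.

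First I would split by the triangle inequality in $L^q(B_r)$:
\begin{equation*}
\left(\int_{B_r}|u-u_S|^q\,dx\right)^{1/q} \leq \left(\int_{B_r}|u-u_{B_r}|^q\,dx\right)^{1/q} + |u_{B_r}-u_S|\,|B_r|^{1/q}.
\end{equation*}
The first term is already controlled by $C(n,p)\bigl(\int_{B_r}|\nabla u|^p\bigr)^{1/q}$ via the classical inequality. For the second term, since $u_{B_r}-u_S$ is a constant, I would write
\begin{equation*}
|u_{B_r}-u_S| = \left|\frac{1}{|S|}\int_S (u_{B_r}-u(y))\,dy\right| \leq \frac{1}{|S|}\int_S |u(y)-u_{B_r}|\,dy,
\end{equation*}
and apply Hölder's inequality to obtain $|u_{B_r}-u_S|\leq |S|^{-1/q}\bigl(\int_{B_r}|u-u_{B_r}|^q\,dy\bigr)^{1/q}$. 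Using the hypothesis $|S|\geq c_0 r^n$ together with $|B_r|=\omega_n r^n$, the factor $|B_r|^{1/q}|S|^{-1/q}$ is bounded by $(\omega_n/c_0)^{1/q}$, a constant depending only on $n$ and $c_0$. Combining with the classical Sobolev-Poincaré bound applied once more yields the desired inequality with $C=C(n,p,c_0)$.

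There is no serious obstacle here: the estimate is essentially an exercise in the triangle inequality and Hölder's inequality, and the role of the measure condition $|S|\geq c_0r^n$ is simply to guarantee that $|S|$ and $|B_r|$ are comparable. The only item that needs a little care is tracking the dependence of $C$, making sure the final constant depends on $n$, $p$, and $c_0$ but not on $r$ or $u$; this follows because the classical Sobolev-Poincaré constant on $B_r$ is scale-invariant in the form stated and the substitution argument above introduces only the dimensionless factor $(\omega_n/c_0)^{1/q}$.
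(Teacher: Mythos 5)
Your argument is correct, and it is worth noting that the paper itself offers no proof of this statement: Theorem \ref{thm3} is simply quoted as a known inequality with a citation to Hebey, so any self-contained derivation is already more than the paper provides. Your reduction to the classical Sobolev--Poincar\'e inequality (with the full ball average $u_{B_r}$) via the triangle inequality in $L^q$, followed by the estimate $|u_{B_r}-u_S|\leq |S|^{-1/q}\|u-u_{B_r}\|_{L^q(B_r)}$ from H\"older, is the standard and correct way to obtain the version with an arbitrary subset $S$ of proportional measure; the hypothesis $|S|\geq c_0 r^n$ enters exactly where you say it does, and the scale invariance of the classical inequality in the conjugate-exponent form $\frac{1}{q}=\frac{1}{p}-\frac{1}{n}$ ensures the constant is independent of $r$. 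One small slip: when you invoke the classical inequality you write that $\bigl(\int_{B_r}|u-u_{B_r}|^q\,dx\bigr)^{1/q}$ is bounded by $C\bigl(\int_{B_r}|\nabla u|^p\,dx\bigr)^{1/q}$; the exponent on the right should be $\frac{1}{p}$, not $\frac{1}{q}$, as is clear from the final form \eqref{meqn3} after raising to the power $q$. This is a typo rather than a gap, and the rest of the argument is sound.
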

\begin{theorem} \label{thm4} {\bf Korn's Inequality on a Ball} If $u\in
H^1(B_r)$ then \begin{equation} \label{meqn4} \|\nabla
u\|^2_{L^2(B_r)}\leq
C\left(\|\kappa(u)\|^2_{L^2(B_r)}+\frac{1}{r^2}\|
u\|^2_{L^2(B_r)}\right)\end{equation} where $C$ only depends on
$n$.\end{theorem}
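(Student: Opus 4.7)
The plan is to reduce the estimate to the unit ball by scaling and then prove Korn's inequality on $B_1$ via an algebraic identity together with Ne\v{c}as' lemma. Given $u \in H^1(B_r)$, set $v(y) = u(ry)$ for $y \in B_1$; the chain rule and a change of variables yield
\begin{equation*}
\|\nabla v\|_{L^2(B_1)}^2 = r^{2-n}\|\nabla u\|_{L^2(B_r)}^2, \quad \|\kappa(v)\|_{L^2(B_1)}^2 = r^{2-n}\|\kappa(u)\|_{L^2(B_r)}^2, \quad \|v\|_{L^2(B_1)}^2 = r^{-n}\|u\|_{L^2(B_r)}^2.
\end{equation*}
An inequality on $B_1$ of the form $\|\nabla v\|_{L^2(B_1)}^2 \leq C(\|\kappa(v)\|_{L^2(B_1)}^2 + \|v\|_{L^2(B_1)}^2)$ therefore rescales, after dividing by $r^{2-n}$, to
\begin{equation*}
\|\nabla u\|_{L^2(B_r)}^2 \leq C\|\kappa(u)\|_{L^2(B_r)}^2 + C r^{-2}\|u\|_{L^2(B_r)}^2,
\end{equation*}
which is the claimed bound on $B_r$ with a constant depending only on $n$.

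For the inequality on $B_1$, the key observation is the pointwise identity
\begin{equation*}
\partial_j \partial_k u^i = \partial_j \kappa_{ik}(u) + \partial_k \kappa_{ij}(u) - \partial_i \kappa_{jk}(u),
\end{equation*}
obtained by expanding $\kappa_{ij}(u) = \tfrac{1}{2}(u^i_j + u^j_i)$ on the right-hand side and cancelling the mixed partials. This identity shows that every second distributional derivative of a component of $u$ can be written as a first derivative of $\kappa(u) \in L^2(B_1)$, so $\partial_j \partial_k u^i \in H^{-1}(B_1)$; together with the trivial observation $\partial_k u^i \in H^{-1}(B_1)$, Ne\v{c}as' lemma---which says that on a Lipschitz domain a distribution whose gradient lies in $H^{-1}$ is itself in $L^2$ with quantitative norm control---yields $\nabla u \in L^2(B_1)$ and
\begin{equation*}
\|\nabla u\|_{L^2(B_1)} \leq C\bigl(\|u\|_{H^{-1}(B_1)} + \|\kappa(u)\|_{L^2(B_1)}\bigr) \leq C\bigl(\|u\|_{L^2(B_1)} + \|\kappa(u)\|_{L^2(B_1)}\bigr).
\end{equation*}

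The main obstacle is Ne\v{c}as' lemma itself, which on a Lipschitz domain is nontrivial and usually proved via Fourier analysis or a Bogovskii-type right inverse for the divergence. A route that appears to avoid it is a Rellich compactness argument: if the inequality on $B_1$ failed there would exist $u_n \in H^1(B_1)$ with $\|\nabla u_n\|_{L^2} = 1$ and $\|\kappa(u_n)\|_{L^2}^2 + \|u_n\|_{L^2}^2 \to 0$, whose $L^2$-limit would be simultaneously a rigid motion (as $\kappa = 0$) and identically zero, and the algebraic identity above would then force $\|\nabla u_n\|_{L^2} \to 0$, a contradiction. Closing this argument quantitatively still relies on essentially the same gain-of-derivative statement, so Ne\v{c}as' lemma is the heart of the matter; once it is granted, the proof reduces to the identity and careful bookkeeping of scale factors.
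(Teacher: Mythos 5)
The paper does not prove this statement at all: Theorem \ref{thm4} is quoted as a ``well-known inequality'' with a citation to Oleinik, Shamaev, and Yosifian, so there is no in-paper argument to compare against. Judged on its own, your proof is correct and is the standard modern proof of Korn's second inequality. The scaling computation reducing $B_r$ to $B_1$ is right and correctly produces the $r^{-2}$ factor on the $L^2$ term with a constant depending only on $n$; the identity $\partial_j\partial_k u^i = \partial_j\kappa_{ik}(u) + \partial_k\kappa_{ij}(u) - \partial_i\kappa_{jk}(u)$ is verified by direct expansion; and the application of Ne\v{c}as' (Lions') lemma with $f=\partial_k u^i$, using $\|f\|_{H^{-1}}\leq C\|u\|_{L^2}$ and $\|\nabla f\|_{H^{-1}}\leq C\|\kappa(u)\|_{L^2}$, closes the estimate on $B_1$. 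You are right to flag that the entire analytic weight rests on Ne\v{c}as' lemma; since the paper itself treats Korn's inequality as quotable, invoking that lemma as a black box is a legitimate move, and for what it is worth the cited source proves the result for star-shaped domains via an explicit Sobolev-type integral representation built on exactly the same second-derivative identity, so your route is essentially a modernized version of theirs. Your closing remark about the Rellich compactness argument is also accurately self-critical: as sketched it only yields weak convergence of the gradients, so it does not in fact bypass the gain-of-derivative step, and it is best regarded as a remark rather than part of the proof.
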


We now state and prove a Caccioppoli inequality for eigenfunctions:
\begin{theorem} \label{thm5} Let $u$ be an eigenfunction with eigenvalue $\sigma$
associated to the operator $L$ satisfying either (\ref{eqn1}) or
(\ref{eqn6}) with uniformly bounded coefficients or associated to
(\ref{eqn1aa}) with continuous coefficients. Extending $u$ to be 0
outside $\Omega_{\varepsilon}$, there exists $r_0>0$ so that if
$r_0\geq r>0$, $x\in \mathbb{R}^n$, we have

\begin{align} -\!\!\!\!\!\!\int_{B_r}|\nabla u|^2~dy\leq &
C_1\left(-\!\!\!\!\!\!\int_{B_{2r}}|\nabla
u|^{\frac{2n}{n+2}}~dy\right)^{\frac{n+2}{n}} \nonumber \\
& \label{meqn5}+C_2|\sigma|-\!\!\!\!\!\!\int_{B_{2r}}|
u|^2~dy+C_3-\!\!\!\!\!\!\int_{B_{2r}}|\nabla u|^2~dy
\end{align}
 where $B_r$ is a ball with radius $r$ centered at
$x$, $C_3<1$, and $C_l>0$ only depends on
$M=\max_{i,j,\alpha,\beta}\|a^{\alpha
\beta}_{ij}\|_{L^{\infty}(\Omega_{\varepsilon})}$, $n$, $m$,
$\theta$, $\tau$, and $C_0$. Furthermore, if $L$ satisfies either
(\ref{eqn1}) or (\ref{eqn6}) with uniformly bounded coefficients,
then the inequality holds for any $r>0$.\end{theorem}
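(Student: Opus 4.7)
The plan is a Caccioppoli test-function argument, calibrated to the three ellipticity hypotheses and sensitive to the position of $B_{2r}(x)$ relative to $\partial\Omega_\varepsilon$. Fix a cutoff $\eta\in C^\infty_c(B_{2r}(x))$ with $\eta\equiv 1$ on $B_r(x)$, $0\leq\eta\leq 1$, and $|\nabla\eta|\leq C/r$. I would split into the interior case, where $B_{2r}(x)\subset\Omega_\varepsilon$, and the boundary case, where $B_{2r}(x)\cap\Omega_\varepsilon^c\neq\emptyset$. In the interior case set $\phi=\eta^2(u-\bar u)$ with $\bar u = -\!\!\!\!\!\!\int_{B_{2r}}u$; since $\operatorname{supp}\eta\subset\Omega_\varepsilon$, $\phi\in H^1_0(\Omega_\varepsilon)$. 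In the boundary case set $\phi=\eta^2 u$, which lies automatically in $H^1_0(\Omega_\varepsilon)$ because $u$ has been extended by zero. Substituting $\phi$ into (\ref{meqn40}) and moving the $\nabla\eta$ cross term produces the identity
\begin{equation*}
\int\eta^2\,a^{\alpha\beta}_{ij}u^\alpha_i u^\beta_j\,dy = -2\int\eta\,\eta_j\,a^{\alpha\beta}_{ij}u^\alpha_i v^\beta\,dy + \sigma\int\eta^2 u^\gamma v^\gamma\,dy,
\end{equation*}
where $v=u-\bar u$ or $v=u$ according to the case.

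Next I would apply the appropriate coerciveness to the left side. Under strong ellipticity (\ref{eqn1}) it is $\geq\theta\int\eta^2|\nabla u|^2$; under the Lam\'e condition (\ref{eqn6}) it is $\geq\tau\int\eta^2|\kappa(u)|^2$, after which Korn's inequality (Theorem \ref{thm4}) on $B_r$ applied to $u-\bar u$ or $u$ returns $\int_{B_r}|\nabla u|^2$; under the Legendre-Hadamard condition (\ref{eqn1aa}) I apply G\aa rding's inequality (\ref{eqn1111}) to the compactly supported vector field $\eta v$, using $|\nabla(\eta v)|^2\geq\tfrac12\eta^2|\nabla v|^2-|\nabla\eta|^2|v|^2$ to isolate the desired gradient term. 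In all three cases Young's inequality absorbs the $\eta\eta_j au_iv$ cross term into half of the coercive lower bound, and a short computation using $|\bar u|^2|B_{2r}|\leq\int_{B_{2r}}|u|^2$ reduces the $\sigma$-term to a clean $C|\sigma|\int_{B_{2r}}|u|^2$. What remains on the right is then a multiple of $r^{-2}\int_{B_{2r}}v^2$, the $|\sigma|$-term, and, only in the L-H case, a G\aa rding lower-order remainder of the form $C_2\int\eta^2 v^2$.

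To arrive at the averaged form of (\ref{meqn5}), I apply the Sobolev-Poincar\'e inequality (Theorem \ref{thm3}) with $p=2n/(n+2)$. In the interior case take $S=B_{2r}$ with $u_S=\bar u$; in the boundary case take $S=B_{2r}\cap\Omega_\varepsilon^c$, whose measure is $\geq c_0 r^n$ by (\ref{eqn11}) and on which $u$ vanishes so $u_S=0$. Either choice yields $\int_{B_{2r}}v^2\leq C\bigl(\int|\nabla u|^{2n/(n+2)}\bigr)^{(n+2)/n}$, and dividing by $|B_r|\sim r^n$ produces the $C_1$ and $C_2|\sigma|$ terms of (\ref{meqn5}). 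The G\aa rding remainder $C_2\int\eta^2 v^2$ is instead bounded via the ordinary Poincar\'e inequality, $\int v^2\leq Cr^2\int_{B_{2r}}|\nabla u|^2$, so that after averaging it contributes a coefficient $C_3=CC_2 r^2$ in front of $-\!\!\!\!\!\!\int_{B_{2r}}|\nabla u|^2$. Choosing $r_0$ small enough (dependent only on the G\aa rding constant $C_2$, hence only on $M,\theta,n,m$) makes $C_3<1$ whenever $r\leq r_0$; for (\ref{eqn1}) and (\ref{eqn6}) no G\aa rding remainder appears, so $C_3=0$ and the estimate holds for all $r>0$.

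The main obstacle is the L-H case, where the lower-order term in G\aa rding's inequality both forces $C_3>0$ and requires the $r\leq r_0$ restriction. Making $r_0$ independent of $\sigma$ demands a clean separation: the eigenvalue contribution, handled by Sobolev-Poincar\'e, must be channeled into the $C_2|\sigma|\int|u|^2$ output, while the G\aa rding lower-order contribution, handled by the ordinary Poincar\'e inequality, must be channeled into the $C_3\int|\nabla u|^2$ output, so that $C_3$ acquires no $\sigma$-dependence.
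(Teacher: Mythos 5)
Your strategy coincides with the paper's: test the weak formulation (\ref{meqn40}) with $\eta^2(u-\rho)$, split into an interior case ($\rho=$ mean over the double ball) and a boundary case ($\rho=0$), apply the coercivity appropriate to each ellipticity hypothesis (with Korn's inequality, Theorem \ref{thm4}, in the Lam\'e case), and finish with the Sobolev--Poincar\'e inequality (Theorem \ref{thm3}). The one place you genuinely diverge is the Legendre--Hadamard case: you invoke G{\aa}rding's inequality (\ref{eqn1111}) on the compactly supported field $\eta v$ and pay for it with the zeroth-order remainder $C_2\int|\eta v|^2$, which Poincar\'e at scale $r$ converts into $C_3=CC_2r^2<1$ for $r\leq r_0$; the paper instead re-derives the coercivity directly --- a Fourier-transform lower bound for frozen (constant) coefficients followed by a perturbation argument using the uniform modulus of continuity, which is where its absorbable term $c\int_{B_{2r}}|\nabla u|^2$ with $c<1$ and its $r_0$ come from. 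Both routes are valid and both keep $r_0$ and $C_3$ independent of $\sigma$; yours is shorter given that (\ref{eqn1111}) is quoted as known, while the paper's is self-contained. Two details need repair. First, in the boundary case you cannot deduce $|B_{2r}\cap\Omega_\varepsilon^c|\geq c_0r^n$ from (\ref{eqn11}) knowing only that $B_{2r}\cap\Omega_\varepsilon^c\neq\emptyset$: applying (\ref{eqn11}) to the ball of radius $2r$ gives density of the complement only in $B_{4r}$. The paper accordingly runs Sobolev--Poincar\'e with $S\subset B_{4r}$ and recovers the $B_{2r}$ form by a covering argument; you should do the same. Second, in the Lam\'e case the Young term $\tfrac{\omega}{2}\int_{B_{2r}}\eta^2|\nabla u|^2$ cannot be absorbed into the Korn-produced left-hand side $\int_{B_r}|\nabla u|^2$ (the annulus $B_{2r}\setminus B_r$ is uncontrolled), so there $C_3=C\omega/(2\tau)>0$ rather than $0$ as you assert --- harmless, since it is $r$-independent and can be made less than $1$ by choosing $\omega$ small, so the ``all $r>0$'' conclusion survives.
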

\begin{proof} First, choose a ball $B_r$ and define a cutoff function $\nu \in C^{\infty}_c(\mathbb{R}^n)$ to be so that $\nu=1$ in
$B_r$, $\nu=0$ outside $B_{2r}$, $|\nabla \nu|\leq \frac{C_n}{r}$,
and $0\leq \nu \leq1$, where $C_n$ only depends on $n$. Below, we
will find an appropriate constant vector $\rho \in \mathbb{R}^m$, so
that $\nu^2(u-\rho)\in H^1_0(\Omega_{\varepsilon})$. By the weak
formulation (\ref{meqn40}), we have
$$\int_{\Omega_{\varepsilon}}a^{\alpha
\beta}_{ij}u^{\alpha}_i[\nu^2(u-\rho)]^{\beta}_j~dy=\sigma
\int_{\Omega_{\varepsilon}}u^{\gamma}[\nu^2(u-\rho)]^{\gamma}~dy.$$
Then, performing the differentiations, we get
\begin{equation}\label{eqn1bb} \int_{\Omega_{\varepsilon}}a^{\alpha
\beta}_{ij}u^{\alpha}_i[2\nu
\nu_j(u-\rho)^{\beta}+\nu^2u^{\beta}_j]~dy=\sigma
\int_{\Omega_{\varepsilon}}u^{\gamma}\nu^2(u-\rho)^{\gamma}~dy.\end{equation}
From this point, the argument depends on the ellipticity condition. We have 3 cases. \vspace{.25in}
\newline {\it case 1: $L$ satisfies the strong ellipticity condition (\ref{eqn1}).}\vspace{.25in} \newline \indent
Using (\ref{eqn1}) and properties of $\nu$, we obtain the inequality
$$\int_{B_{2r}}\nu^2 a^{\alpha
\beta}_{ij}u^{\alpha}_iu^{\beta}_j~dy\leq
\int_{B_{2r}}2M\frac{C_n}{r}\nu|\nabla
u||u-\rho|~dy+\int_{B_{2r}}|\sigma||u||u-\rho|~dy$$ which, for any
constant $\omega>0$, then leads to
\begin{align}  \int_{B_{2r}}\nu^2 a^{\alpha
\beta}_{ij}u^{\alpha}_iu^{\beta}_j~dy &\leq
\int_{B_{2r}}\frac{\omega \nu^2|\nabla u|^2}{2}~dy+\frac{C}{\omega
r^2}\int_{B_{2r}}|u-\rho|^2~dy \nonumber \\
&\label{meqn50}\hspace{.25in}+C|\sigma|\int_{B_{2r}}|u|^2~dy\end{align}
where $C$ depends on $M$ and $C_n$. Then choosing $\omega=\theta$ in
$(\ref{meqn50})$ gives
$$\frac{\theta}{2}\int_{B_{2r}}\nu^2|\nabla u|^2~dy \leq \frac{C}{\theta r^2}\int_{B_{2r}}|u-\rho|^2~dy+C|\sigma|\int_{B_{2r}}|u|^2~dy.$$
Then, multiplying both sides by $\frac{2}{\theta}$ and using that
$\nu=1$ on $B_r$ gives
\begin{equation} \label{eqnaaa1}\int_{B_{r}}|\nabla u|^2~dy
\leq  \frac{2C}{\theta^2
r^2}\int_{B_{2r}}|u-\rho|^2~dy+\frac{2C|\sigma|}{\theta}\int_{B_{2r}}|u|^2~dy.\end{equation}
Now, for the term $\displaystyle \int_{B_{2r}}|u-\rho|^2~dy$, we
must consider two subcases.\vspace{.25in}

\noindent {\it subcase A} \vspace{.25in} \newline \indent If
$B_{2r}\subset \Omega_{\varepsilon}$, then let $\displaystyle
\rho^{\alpha}=-\!\!\!\!\!\!\int_{B_{2r}}u^{\alpha}~dy$. Our
condition on the support of $\nu$ implies $\nu^2(u-\rho)\in
H^1_0(\Omega_{\varepsilon})$. So, setting $q=2$ and $S=B_{2r}$ in
the Sobolev-Poincar\'{e} Inequality (\ref{meqn3}), we obtain
$$\int_{B_{2r}}|u-\rho|^2~dy\leq C\left(\int_{B_{2r}}|\nabla
u|^{\frac{2n}{n+2}}~dy\right)^{\frac{n+2}{n}}.$$ Using this estimate
with (\ref{eqnaaa1}) gives
$$\int_{B_{r}}|\nabla u|^2~dy
\leq \frac{C}{ r^2}\left(\int_{B_{2r}}|\nabla
u|^{\frac{2n}{n+2}}~dy\right)^{\frac{n+2}{n}}+C|\sigma|\int_{B_{2r}}|u|^2~dy.$$
Now, dividing through by $r^n$ gives the desired result with
$C_3=0$.\vspace{.25in}

\noindent {\it subcase B} \vspace{.25in} \newline \indent If
$B_{2r}\cap \Omega_{\varepsilon}^c\neq \emptyset$, then set
$\rho=0$, which, again, guarantees that $\nu^2(u-\rho)\in
H^1_0(\Omega_{\varepsilon})$. So setting $q=2$ and $S=B_{4r}\cap
\Omega_{\varepsilon}$ in the Sobolev-Poincar\'{e} Inequality
(\ref{meqn3}), we have by our assumption on $\Omega_{\varepsilon}^c$
(\ref{eqn11}) that
$$\int_{B_{4r}}|u-\rho|^2~dy\leq C\left(\int_{B_{4r}}|\nabla
u|^{\frac{2n}{n+2}}~dy\right)^{\frac{n+2}{n}}.$$ From
(\ref{eqnaaa1}), we obtain
$$\int_{B_{r}}|\nabla u|^2~dy
\leq \frac{C}{ r^2}\left(\int_{B_{4r}}|\nabla
u|^{\frac{2n}{n+2}}~dy\right)^{\frac{n+2}{n}}+C|\sigma|\int_{B_{4r}}|u|^2~dy.$$
A simple covering argument gives the estimate with $B_{4r}$ replaced
with $B_{2r}$.\vspace{.25in}
\newline \noindent {\it case 2: $L$ satisfies
the ellipticity condition (\ref{eqn6}).} \vspace{.25in} \newline
\indent From (\ref{eqn6}) and $(\ref{meqn50})$, we have
$$\int_{B_{r}} \tau |\kappa(u)|^2~dy\leq \int_{B_{2r}}\frac{\omega
\nu^2|\nabla u|^2}{2}~dy+\frac{C}{\omega
r^2}\int_{B_{2r}}|u-\rho|^2~dy+C|\sigma|\int_{B_{2r}}|u|^2~dy.$$
Also, by Korn's inequality (\ref{meqn4}), we have
$$\frac{\tau}{C}\int_{B_{r}}|\nabla
u|^2~dy-\frac{\tau}{r^2}\int_{B_{r}}|u-\rho|^2~dy\leq \int_{B_{r}}
\tau |\kappa(u)|^2~dy.$$ This implies 
\begin{align*}\int_{B_{r}}|\nabla
u|^2~dy\leq & \frac{C\omega}{2\tau}\int_{B_{2r}}|\nabla
u|^2~dy+C\left(\frac{1}{\omega \tau
r^2}+\frac{1}{r^2}\right)\int_{B_{2r}}|u-\rho|^2~dy\\&+\frac{C|\sigma|}{\tau}\int_{B_{2r}}|u|^2~dy.
\end{align*}
This again leads to two subcases. We must choose $\rho$
appropriately and use the Sobolev-Poincar\'{e} inequality
(\ref{meqn3}) as in case 1. Then, by taking $\omega$ sufficiently
small, we obtain the desired result. \vspace{.25in}
\newline \noindent {\it case 3: $L$ satisfies the Legendre-Hadamard condition
$(\ref{eqn1aa})$ with continuous coefficients in
$\overline{\Omega}_{\varepsilon}$.}\vspace{.25in}

We note that it suffices to study when $u\in
C^{\infty}_c(\Omega_{\varepsilon})$ and first consider when the
coefficients are constant. We rewrite the left side of
$(\ref{eqn1bb})$ as
\begin{align*}&\int_{\Omega_{\varepsilon}}a^{\alpha \beta}_{ij}((u-\rho)^{\alpha}\nu)_i((u-\rho)^{\beta}\nu)_j~dy\\&+\int_{\Omega_{\varepsilon}}a^{\alpha \beta}_{ij}[\nu
\nu_ju^{\alpha}_i(u-\rho)^{\beta}-\nu_i\nu
(u-\rho)^{\alpha}u^{\beta}_j-\nu_i\nu_j(u-\rho)^{\alpha}(u-\rho)^{\beta}]~dy.\end{align*}
This implies 
\begin{align*}&\int_{B_{2r}}a^{\alpha
\beta}_{ij}((u-\rho)^{\alpha}\nu)_i((u-\rho)^{\beta}\nu)_j~dy\\ & \leq 
C\int_{B_{2r}}|\nabla
\nu||\nabla((u-\rho)\nu)||u-\rho|+|u-\rho|^2|\nabla
\nu|^2+|\sigma||u||u-\rho|~dy.\end{align*} We note that we may use the Fourier
transform to get a lower bound for the left side to achieve the
estimate

$$
\int_{B_{2r}}|\nabla((u-\rho)\nu)|^2~dy\leq
\frac{C}{r^2}\int_{B_{2r}}|u-\rho|^2~dy+C|\sigma|\int_{B_{2r}}|u|^2~dy.
$$ This implies the estimate
\begin{equation}
\label{eqnaaa2} \int_{B_{r}}|\nabla
u|^2~dy\leq
\frac{C}{r^2}\int_{B_{2r}}|u-\rho|^2~dy+C|\sigma|\int_{B_{2r}}|u|^2~dy.
\end{equation}
So, again, if we employ the Sobolev-Poincar\'{e} inequality
(\ref{meqn3}), we get the desired result in the case of constant
coefficients. If the coefficients are continuous and non-constant,
then we freeze the coefficients at $x$. That is, from the weak
formulation (\ref{meqn40}), we have
\begin{align}\int_{\Omega_{\varepsilon}}a^{\alpha
\beta}_{ij}(x)u^{\alpha}_i((u-\rho)\nu^2)^{\beta}_j~dy&+\int_{\Omega_{\varepsilon}}(a^{\alpha
\beta}_{ij}-a^{\alpha
\beta}_{ij}(x))u^{\alpha}_i((u-\rho)\nu^2)^{\beta}_j~dy \nonumber\\
&\label{eqn1dd}=\sigma\int_{\Omega_{\varepsilon}}u^{\gamma}((u-\rho)\nu^2)^{\gamma}~dy.
\end{align}
So, if we define the modulus of continuity to be
$$M(x_0,R)=\max_{\substack{ y\in \overline{B_{R}(x_0)}\\i,j,\alpha, \beta}}|a^{\alpha
\beta}_{ij}(y)-a^{\alpha \beta}_{ij}(x_0)|$$ then we have that
\begin{align*}\lefteqn{ \int_{B_{2r}}(a^{\alpha \beta}_{ij}-a^{\alpha
\beta}_{ij}(x))u^{\alpha}_i((u-\rho)\nu^2)^{\beta}_j~dy}\\
&\hspace{.5in}\leq M(x,2r)\int_{B_{2r}}\nu^2|\nabla
u|^2~dy+2M(x,2r)\int_{B_{2r}}\nu |\nabla\nu||\nabla u||u-\rho|~dy \\
&\hspace{.5in}\leq C(M(x,2r)+M(x,2r)^2)\int_{B_{2r}}|\nabla
u|^2~dy+\frac{C}{r^2}\int_{B_{2r}}|u-\rho|^2~dy.
\end{align*}
 Also, by the
uniform continuity of the coefficients on
$\overline{\Omega}_{\varepsilon}$, for any $c<1$, there exists $r_0$
depending on $c$, so that if $C(x_0,R)=C(M(x_0,2R)+M(x_0,2R)^2)$ and
$r\leq r_0$, then
$$C(x_0,r)\leq c$$ for all $x_0\in \overline{\Omega}_{\varepsilon}$.
So, now moving the second term on the left side of $(\ref{eqn1dd})$
to the right and using the constant coefficient case
(\ref{eqnaaa2}), we obtain that for any $c<1$, there exists $r_0$ so
that if $r\leq r_0$,
$$\int_{B_{r}}|\nabla
u|^2~dy\leq
\frac{C}{r^2}\int_{B_{2r}}|u-\rho|^2~dy+C|\sigma|\int_{B_{2r}}|u|^2~dy+c\int_{B_{2r}}|\nabla
u|^2~dy.$$ We again choose $\rho$ appropriately and apply the
Sobolev-Poincar\'{e} inequality (\ref{meqn3}) to get the desired
result.
\end{proof}

As stated earlier, our proof of Theorem \ref{thm1} relies on the
gradient of an eigenfunction satisfying the reverse H\"{o}lder
inequality, as in our next theorem.

\begin{theorem} \label{revthm1} There exists $\epsilon_1>0$ so that if $u$ is an eigenfunction with eigenvalue $\sigma$, then
\begin{equation} \label{eqn13}
-\!\!\!\!\!\!\int_{\Omega_{\varepsilon}}|\nabla
u|^{\widetilde{p}}~dy\leq
C\left[\left(-\!\!\!\!\!\!\int_{\Omega_{\varepsilon}}|\nabla
u|^2~dy\right)^{\frac{\widetilde{p}}{2}}+|\sigma|^{\frac{\widetilde{p}}{2}}-\!\!\!\!\!\!\int_{\Omega_{\varepsilon}}|u|^{\widetilde{p}}~dy\right]\end{equation}
where $2\leq \widetilde{p}< 2+\epsilon_1$, and $\epsilon_1$ and $C$
are independent of $\varepsilon$ and any eigenvalue.
\end{theorem}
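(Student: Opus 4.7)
The plan is to apply Theorem \ref{thm2} to carefully chosen functions built from $|\nabla u|$ and $|u|$, using Theorem \ref{thm5} to verify the maximal-function hypothesis. After extending $u$ by zero outside $\Omega_{\varepsilon}$, set $q=(n+2)/n$, $g=|\nabla u|^{2n/(n+2)}$, and $f=|\sigma|^{n/(n+2)}|u|^{2n/(n+2)}$, so that $|g|^{q}=|\nabla u|^{2}$ and $|f|^{q}=|\sigma|\,|u|^{2}$. With these choices, the three terms on the right-hand side of (\ref{meqn5}) are bounded pointwise by $C_1 M^{q}(g)(x)$, $C_2 M(|f|^{q})(x)$, and $C_3 M(|g|^{q})(x)$, respectively, since each is an average over a ball centered at $x$. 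Taking the supremum over admissible radii then produces exactly the hypothesis of Theorem \ref{thm2}, with $b=C_1$ and $a=C_3<1$.

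For this hypothesis on the truncated maximal function $M_{d(x)/2}(|g|^{q})$ to be valid on the reference cube, I need the Caccioppoli estimate at every radius $r<d(x)/2$. In cases 1 and 2 of Theorem \ref{thm5} the inequality holds at every scale, so I enclose $\Omega_{\varepsilon}$ in a single fixed cube $Q_R$ (independent of $\varepsilon$) and apply Theorem \ref{thm2} once. In case 3, Theorem \ref{thm5} is only valid for $r\leq r_0$, where $r_0$ is determined by the modulus of continuity of the coefficients and thus independent of $\varepsilon$. To accommodate this, I cover $\Omega_\varepsilon$ by a finite collection of cubes of side length at most $r_0$, whose cardinality is uniformly bounded since $|\Omega_\varepsilon|$ is, apply Theorem \ref{thm2} on each, and sum the resulting local estimates.

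Theorem \ref{thm2} then yields $g\in L^{p}$ on each such cube for some $p\in[q,q+\epsilon)$, together with the reverse H\"{o}lder estimate (\ref{meqn2}). Setting $\widetilde{p}=2np/(n+2)$ converts this back into an estimate for $|\nabla u|$: one checks $|g|^{p}=|\nabla u|^{\widetilde p}$, $p/q=\widetilde p/2$, and $|f|^{p}=|\sigma|^{\widetilde p/2}|u|^{\widetilde p}$. Raising (\ref{meqn2}) to the $p$-th power, and rewriting the averages over $Q_{R/2}$ and $Q_R$ as averages over $\Omega_\varepsilon$ (using that $u$ vanishes outside $\Omega_\varepsilon$ and that $|\Omega_\varepsilon|$ is bounded above and below uniformly in $\varepsilon$), produces (\ref{eqn13}) with $\epsilon_1 = 2n\epsilon/(n+2)$.

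The main obstacle is showing that $\epsilon_1$ and $C$ are independent of $\varepsilon$ and of the eigenvalue. Independence of $\varepsilon$ requires that every structural input to Theorems \ref{thm5} and \ref{thm2} (namely $C_0$, $n$, $m$, $M$, $\theta$, $\tau$, the modulus of continuity in case 3, and the size and number of the covering cubes) be uniform in $\varepsilon$; this is exactly where the measure-theoretic boundary condition (\ref{eqn11}) and the hypothesis on the coefficients pay off. Independence of $\sigma$ is built into the choice of $f$: the factor $|\sigma|^{n/(n+2)}$ is precisely what causes $|\sigma|^{\widetilde p/2}$ to appear on the right of (\ref{eqn13}) after re-exponentiation, and $\sigma$ does not otherwise enter the application of Theorem \ref{thm2}.
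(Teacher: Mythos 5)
Your proposal is correct and follows essentially the same route as the paper: the same choices $q=\tfrac{n+2}{n}$, $g=|\nabla u|^{2n/(n+2)}$, $f\sim|\sigma|^{n/(n+2)}|u|^{2n/(n+2)}$, verification of the maximal-function hypothesis of Theorem \ref{thm2} via the Caccioppoli estimate of Theorem \ref{thm5}, a single enclosing cube in cases 1--2 versus a uniformly finite covering by cubes of side $\lesssim r_0$ in case 3, and the substitution $\widetilde{p}=2np/(n+2)$. The only (cosmetic) difference is that the paper absorbs the Caccioppoli constant into the definition of $f$ so that the hypothesis of Theorem \ref{thm2} is matched literally, whereas you carry it as a separate factor.
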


\begin{proof}Now if $u$ is an eigenfunction with eigenvalue $\sigma$, we
have $u\in H^1_0(\Omega_{\varepsilon})$, and thus we may employ the
Sobolev inequality to get that $|u|\in L^r(\Omega_{\varepsilon})$
for some $r>2$. If $L$ satisfies either (\ref{eqn1}) or (\ref{eqn6})
with uniformly bounded coefficients, then we may choose a cube
$Q_R$, centered at 0, with sidelength $R$ such that
$\Omega_{\varepsilon}\subset Q_{\frac{R}{2}}$, uniformly in
$\varepsilon$, and set $g=|\nabla u|^{\frac{2n}{n+2}}$,
$f=(C_3|\sigma|)^{\frac{n}{n+2}}|u|^{\frac{2n}{n+2}}$,
$q=\frac{n+2}{n}$, and $u=0$ outside $\Omega_{\varepsilon}$, we may
conclude by (\ref{meqn5}) and (\ref{meqn2}) that
$$\left(-\!\!\!\!\!\!\int_{\Omega_{\varepsilon}}|\nabla
u|^{\frac{2np}{n+2}}~dy\right)^{\frac{1}{p}}\leq
C\left[\left(-\!\!\!\!\!\!\int_{\Omega_{\varepsilon}}|\nabla
u|^2~dy\right)^{\frac{n}{n+2}}+|\sigma|^{\frac{n}{n+2}}\left(-\!\!\!\!\!\!\int_{\Omega_{\varepsilon}}|u|^{\frac{2np}{n+2}}~dy\right)^{\frac{1}{p}}\right]$$
where $\frac{n+2}{n}\leq p \leq \frac{n+2}{n}+\epsilon$, which, from
Theorem $\ref{thm5}$ is independent of $\varepsilon$ and any
eigenvalue. So, setting $\widetilde{p}=\frac{2np}{n+2}$, we have the
result. If $L$ satisfies (\ref{eqn1aa}) with continuous
coefficients, then since we only have Theorem $\ref{thm5}$ true for
small $r$, we must cover $\Omega_{\varepsilon}$ with a fixed number
of cubes and apply (\ref{meqn2}) to each cube to obtain the
result.\end{proof}

\section{Stability of Eigenvalues}

\indent

From this point, let $\sigma^{\varepsilon}_k$ be the $kth$
eigenvalue with respect to $\Omega_{\varepsilon}$, and
$\phi^{\varepsilon}_k$ be its corresponding eigenfunction with
$\phi^{\varepsilon}_k=0$ outside $\Omega_{\varepsilon}$ for
$\varepsilon\geq 0$. We also {\it fix} an eigenvalue $\sigma^{0}_J$
with multiplicity $m_J$ where $\sigma^{0}_{J-1}<\sigma^{0}_J$ if
$J\geq2$. We will consider the family $\{\sigma^{\varepsilon}_J\}$
as $\varepsilon>0$ tends to 0. We begin with the following
proposition taken from Ann\'{e} \cite[p.~2595-2596]{Anne2}.

\begin{lemma} \label{lemma1} Let $(q,\cal{D})$ be a closed non-negative quadratic
form with form domain $\cal{D}$ in the Hilbert space
$(\cal{H},\langle \cdot, \cdot \rangle)$. Define the associated norm
$\|f\|_1^2=\|f\|^2_{{\cal H}}+q(f)$, and the spectral projector
$\Pi_I$ for any interval $I=(\alpha,\beta)$ for which the boundary
does not meet the spectrum.
\begin{enumerate}
\item  Suppose $f\in \cal{D}$ and $\lambda \in I$ satisfy
$$|q(f,g)-\lambda \langle f,g \rangle|\leq \delta \|f\|\|g\|_1~~~~~g\in \cal{D}.$$
Then there exists a constant $C>0$, which depends on $I$, such that
if $a$ is less than the distance of $\alpha$ or $\beta$ to the
spectrum of $q$,
$$\|\Pi_I(f)-f\|_1=\|\Pi_{I^c}(f)\|_1\leq \frac{C\delta}{a}\|f\|.$$
\item Suppose the spectral space $E(I)$ has dimension $m$ and $f_1,...,f_m$ is an orthonormal family which satisfies
$$\|\Pi_{I^c}(f_j)\|_1\leq \delta~~~~~ j=1,...,m.$$ Also let $E$ be
the space spanned by the $f_j$'s. Then, $$\textrm{dist}(E(I),E)\leq
C\delta$$ where the distance is measured as the distance between the
two orthogonal projectors.
\end{enumerate}
\end{lemma}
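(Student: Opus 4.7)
The plan is to pass to the non-negative self-adjoint operator $A$ associated with the closed form $(q,\mathcal{D})$, write $E_\mu$ for its spectral resolution (so $q(h)=\int\mu\,d\|E_\mu h\|^2$ and $\|h\|_1^2=\int(1+\mu)\,d\|E_\mu h\|^2$ for $h\in\mathcal{D}$), and exploit the spectral gap to build appropriate test vectors via the functional calculus.

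For part (1), the key move is to feed the hypothesis with
\[
g := \phi(A)f, \qquad \phi(\mu) := \frac{1+\mu}{\mu-\lambda}\,\chi_{I^c}(\mu),
\]
since then the spectral theorem gives
\[
q(f,g)-\lambda\langle f,g\rangle \;=\; \int_{I^c}(1+\mu)\,d\|E_\mu f\|^2 \;=\; \|\Pi_{I^c}f\|_1^2,
\]
while $\|g\|_1^2 = \int_{I^c}(1+\mu)^3/(\mu-\lambda)^2\,d\|E_\mu f\|^2$. The spectral-gap hypothesis $\mathrm{dist}(\{\alpha,\beta\},\sigma(A)) > a$ forces $|\mu - \lambda| \geq a$ on $\sigma(A) \cap I^c$, and since $I$ is bounded an elementary calculation shows $(1+\mu)^2/(\mu-\lambda)^2 \leq C_I/a^2$ uniformly on $\sigma(A) \cap I^c$ (the ratio tends to $1$ as $\mu \to \infty$ and is controlled by $1/a^2$ for $\mu$ near the endpoints of $I$). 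Hence $\|g\|_1 \leq (C_I^{1/2}/a)\|\Pi_{I^c}f\|_1$, and inserting this into the hypothesis $|q(f,g)-\lambda\langle f,g\rangle| \leq \delta\|f\|\,\|g\|_1$ and dividing by $\|\Pi_{I^c}f\|_1$ yields the stated bound.

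For part (2), let $P$ denote the orthogonal projection onto $E=\mathrm{span}\{f_j\}$ and $Q=\Pi_I$, and note that $\|\cdot\|\leq\|\cdot\|_1$ gives $\|\Pi_{I^c}f_j\|\leq\delta$. A Cauchy-Schwarz bound applied to $(I-Q)Px = -\sum_j\langle x,f_j\rangle\Pi_{I^c}f_j$ yields $\|(I-Q)P\|\leq \sqrt{m}\,\delta$. For the reverse inequality, the Gram matrix of $\{\Pi_If_j\}$ equals $\delta_{jk}-\langle\Pi_{I^c}f_j,f_k\rangle$, which is within $O(\delta)$ of the identity, so for $\delta$ small these $m$ vectors form a basis of the $m$-dimensional space $E(I)$; expanding any unit $e\in E(I)$ as $\sum c_j\Pi_If_j$ with $\|c\|$ of order $1$, the identities
\[
e-\sum_j c_jf_j = -\sum_j c_j\Pi_{I^c}f_j, \qquad \sum_j c_jf_j - Pe = \sum_{j,k}c_j\langle\Pi_{I^c}f_j,f_k\rangle\,f_k,
\]
combined with the triangle inequality, give $\|(I-P)e\|\leq Cm\delta$, hence $\|(I-P)Q\|\leq Cm\delta$. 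Combining the two one-sided bounds via $\|P-Q\|\leq \|(I-Q)P\|+\|(I-P)Q\|$ (which follows from $P-Q = P(I-Q)-(I-P)Q$) yields $\|P-Q\|\leq C\delta$.

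The main technical obstacle is the scalar functional-calculus bound $(1+\mu)^2/(\mu-\lambda)^2\leq C_I/a^2$ on $\sigma(A)\cap I^c$ used in part (1): one must simultaneously control the behaviour at infinity (where the ratio saturates at a constant) and near the endpoints of $I$ (where the small denominator is precisely what the spectral-gap hypothesis on $a$ tames). Once this pointwise bound is in hand, the remainder of the argument reduces to manipulating spectral integrals and a routine Gram-matrix comparison of close subspaces.
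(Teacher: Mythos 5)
The paper offers no proof of this lemma at all---it is quoted directly from Ann\'{e} \cite{Anne2}---so there is no internal argument to compare against; your proof is, in any case, essentially the standard (and, as far as I can tell, the original) spectral-calculus argument, and both parts are correct: the test vector $g=\phi(A)f$ with $\phi(\mu)=(1+\mu)(\mu-\lambda)^{-1}\chi_{I^c}(\mu)$ is exactly the right choice, the gap hypothesis does give $|\mu-\lambda|\geq a$ on $\sigma(A)\cap I^c$, and the Gram-matrix comparison is the standard route to the reverse inclusion in part (2). The only implicit caveats are cosmetic: the pointwise bound $(1+\mu)^2/(\mu-\lambda)^2\leq C_I/a^2$ requires $a$ to be bounded above (harmless, as one may always shrink $a$, or absorb it into the $I$-dependent constant), and part (2) needs $\delta$ small depending on $m$ for the Gram matrix to be invertible (also harmless, since the distance between two orthogonal projectors never exceeds $1$).
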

\noindent This lemma will give us the results we need for the
convergence of eigenvalues. We will prove estimates on
eigenfunctions using the reverse H\"{o}lder inequality
(\ref{eqn13}), which will allow us to use this lemma. We start with
the following proposition which follows immediately from the
construction of eigenvalues.

\begin{prop} \label{prop2} We have for any $\varepsilon >0$, and any $k\in \mathbb{N}$, \begin{equation} \label{eqn14}\sigma^{\varepsilon}_k\leq \sigma^{0}_k. \end{equation} \end{prop}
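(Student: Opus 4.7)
The plan is a standard domain-monotonicity argument for Dirichlet eigenvalues, exploiting the inclusion $\Omega_0 \subset \Omega_\varepsilon$ together with the minimax characterization of $\sigma_k$. The key point is that functions in $H^1_0(\Omega_0)$, extended by zero on $T_\varepsilon$, lie in $H^1_0(\Omega_\varepsilon)$, and both the $L^2$ inner product and the bilinear form $B$ are preserved under this extension because the extension vanishes on $T_\varepsilon$.

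First, I would invoke the standard consequence of the variational construction (\ref{eqn9})--(\ref{eqn10}) that the $k$-th eigenvalue admits the Courant--Fischer minimax description
\begin{equation*}
\sigma^\varepsilon_k \;=\; \min_{\substack{V\subset H^1_0(\Omega_\varepsilon) \\ \dim V = k}} \;\max_{w \in V\setminus\{0\}} R_\varepsilon(w),
\end{equation*}
and likewise for $\sigma^0_k$ with $\Omega_0$ in place of $\Omega_\varepsilon$. The proposition will follow as soon as we exhibit a single $k$-dimensional subspace of $H^1_0(\Omega_\varepsilon)$ on which the Rayleigh quotient $R_\varepsilon$ does not exceed $\sigma^0_k$.

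Next, take the first $k$ eigenfunctions $\phi^0_1,\dots,\phi^0_k \in H^1_0(\Omega_0)$ associated to the eigenvalues $\sigma^0_1 \leq \cdots \leq \sigma^0_k$, normalized as in (\ref{eqn10}). Extending each by zero on $T_\varepsilon$ produces $\widetilde{\phi}^0_j \in H^1_0(\Omega_\varepsilon)$ (this is legitimate since $H^1_0(\Omega_0)$ functions extend by zero across $\partial \Omega_0$), and these are still linearly independent and $L^2(\Omega_\varepsilon)$-orthonormal. Let $V$ denote their span. For any $w = \sum_{j=1}^k c_j \widetilde{\phi}^0_j \in V$, the vanishing on $T_\varepsilon$ yields
\begin{equation*}
\|w\|^2_{L^2(\Omega_\varepsilon)} \;=\; \sum_{j=1}^k |c_j|^2, \qquad B_\varepsilon(w,w) \;=\; \int_{\Omega_0} a^{\alpha\beta}_{ij}\,w^\alpha_i\,w^\beta_j\,dx \;=\; \sum_{j=1}^k |c_j|^2\,\sigma^0_j,
\end{equation*}
where the second equality uses the orthogonality of the $\phi^0_j$ with respect to $B_0$ (an immediate consequence of (\ref{meqn40}) applied to distinct eigenfunctions). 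Hence
\begin{equation*}
R_\varepsilon(w) \;=\; \frac{\sum_j |c_j|^2\,\sigma^0_j}{\sum_j |c_j|^2} \;\leq\; \sigma^0_k.
\end{equation*}

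Applying the minimax formula to this specific $V$ gives $\sigma^\varepsilon_k \leq \max_{w \in V\setminus\{0\}} R_\varepsilon(w) \leq \sigma^0_k$, which is (\ref{eqn14}). There is no real obstacle here; the only points requiring care are the legitimacy of zero-extension from $H^1_0(\Omega_0)$ into $H^1_0(\Omega_\varepsilon)$ and the fact that the bilinear form and $L^2$ norm are genuinely unchanged under that extension, both of which are immediate from the definitions in Section~2.
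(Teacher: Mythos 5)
Your proof is correct and is exactly the argument the paper has in mind: the paper simply asserts that the proposition ``follows immediately from the construction of eigenvalues,'' and your domain-monotonicity/minimax argument via zero-extension of the first $k$ eigenfunctions on $\Omega_0\subset\Omega_\varepsilon$ is the standard way to spell that out.
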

\noindent This proposition gives us the easy half of the inequality
in our theorem. To prove the second half of the inequality, we will
need a few items.

\begin{prop} \label{prop3} For any $\varepsilon>0$, and $k\geq1$, if $\phi=\phi^{\varepsilon}_k$, then we have
\begin{equation} \label{eqn15} \int_{\Omega_{\varepsilon}}|\nabla \phi|^{\widetilde{p}}~dy\leq
C\end{equation} where ${\widetilde{p}}>2$ is from $(\ref{eqn13})$,
and $C$ depends on $|\Omega_0|$ and $n$, with order ${\cal
O}\left(|\sigma^{0}_k|^{\frac{2\widetilde{p}+n(\widetilde{p}-2)}{4}}\right)$
for $n\geq3$ or ${\cal
O}\left(|\sigma^{0}_k|^{\frac{q\widetilde{p}+2(\widetilde{p}-q)}{2q}}\right)$
for $n=2$ where $2-\xi<q<2$ for small $\xi$. Furthermore,
$\widetilde{p}$ and $C$ are independent of $\varepsilon$ and if
$n=2$, $C$ blows up as $q\rightarrow 2$.
\end{prop}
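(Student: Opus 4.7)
The plan is to apply the reverse H\"older inequality of Theorem \ref{revthm1} to the normalized eigenfunction $\phi=\phi^\varepsilon_k$; multiplying through by $|\Omega_\varepsilon|$ turns it into
\[
\int_{\Omega_\varepsilon}|\nabla\phi|^{\widetilde{p}}\,dy
\leq C|\Omega_\varepsilon|^{1-\widetilde{p}/2}\Bigl(\int_{\Omega_\varepsilon}|\nabla\phi|^2\,dy\Bigr)^{\widetilde{p}/2}
+C|\sigma^\varepsilon_k|^{\widetilde{p}/2}\int_{\Omega_\varepsilon}|\phi|^{\widetilde{p}}\,dy,
\]
so the task reduces to bounding $\int|\nabla\phi|^2$ and $\int|\phi|^{\widetilde{p}}$ in terms of $\sigma^0_k$. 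The prefactor $|\Omega_\varepsilon|^{1-\widetilde{p}/2}$ is harmless because $|\Omega_\varepsilon|\geq|\Omega_0|>0$, and $|\sigma^\varepsilon_k|\leq\sigma^0_k$ by Proposition \ref{prop2}. The $L^2$-gradient bound is immediate: plugging $\phi$ into the weak formulation (\ref{meqn40}) gives $B_\varepsilon(\phi,\phi)=\sigma^\varepsilon_k$, and G\aa rding's inequality (\ref{eqn1111}) combined with $\|\phi\|_{L^2}=1$ and Proposition \ref{prop2} yields $\int|\nabla\phi|^2\leq C(\sigma^0_k+1)$.

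The main work is bounding $\int|\phi|^{\widetilde{p}}$, and the argument bifurcates by dimension. For $n\geq 3$, I would interpolate $\|\phi\|_{L^{\widetilde{p}}}\leq\|\phi\|_{L^2}^{1-\theta}\|\phi\|_{L^{2n/(n-2)}}^{\theta}$ with $\theta=n(\widetilde{p}-2)/(2\widetilde{p})$, then invoke the critical Sobolev embedding $H^1_0\hookrightarrow L^{2n/(n-2)}$ and the gradient bound above; this gives $\|\phi\|_{L^{\widetilde{p}}}^{\widetilde{p}}\leq C(\sigma^0_k)^{n(\widetilde{p}-2)/4}$, which combined with the $|\sigma^\varepsilon_k|^{\widetilde{p}/2}$ prefactor produces the advertised total exponent $(2\widetilde{p}+n(\widetilde{p}-2))/4$. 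For $n=2$, the critical embedding is unavailable, so I would select $q\in(2-\xi,2)$ with $q^*:=2q/(2-q)>\widetilde{p}$, which is possible since $q^*\to\infty$ as $q\to 2$ and $\widetilde{p}<2+\epsilon_1$, and interpolate against the subcritical pair $(L^q,L^{q^*})$. Writing $\|\phi\|_{L^{\widetilde{p}}}\leq\|\phi\|_{L^q}^{\theta}\|\phi\|_{L^{q^*}}^{1-\theta}$ with $1-\theta=2(\widetilde{p}-q)/(q\widetilde{p})$, then controlling $\|\phi\|_{L^{q^*}}$ by $C(q)\|\nabla\phi\|_{L^q}$ (subcritical Sobolev) and $\|\nabla\phi\|_{L^q}\leq|\Omega_\varepsilon|^{1/q-1/2}\|\nabla\phi\|_{L^2}$ (H\"older), would give $\|\phi\|_{L^{\widetilde{p}}}^{\widetilde{p}}\leq C(q)(\sigma^0_k)^{(\widetilde{p}-q)/q}$ and thus the stated total exponent $(q\widetilde{p}+2(\widetilde{p}-q))/(2q)$. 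The constant $C(q)$ comes from the Sobolev embedding $W^{1,q}_0\hookrightarrow L^{q^*}$ and blows up as $q\to 2^-$, since $q^*\to\infty$.

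The main technical obstacle is the two-dimensional case: the absence of a critical Sobolev embedding forces interpolation against the subcritical pair $(L^q,L^{q^*})$ with $q<2$, and tracking both the correct exponent on $\sigma^0_k$ and the $q$-dependence of the Sobolev constant is what produces the peculiar form of the $n=2$ bound and its inevitable blow-up as $q\to 2$.
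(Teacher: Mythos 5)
Your proposal is correct and follows essentially the same route as the paper: the reverse H\"older inequality scaled by $|\Omega_\varepsilon|$, the $L^2$-gradient bound from G{\aa}rding's inequality and the normalization $\|\phi\|_{L^2}=1$ together with $\sigma^\varepsilon_k\leq\sigma^0_k$, interpolation against the critical Sobolev exponent for $n\geq 3$, and interpolation against the subcritical pair $(L^q,L^{q^*})$ with the $(2-q)^{-1/2}$ Sobolev constant for $n=2$. The exponent bookkeeping in both dimensional cases matches the paper's.
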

\begin{proof} Now, from $(\ref{eqn13})$, we have
\begin{equation}\label{eqnaaa5}\int_{\Omega_{\varepsilon}}|\nabla \phi|^{\widetilde{p}}~dy\leq
C\left[|\Omega_{\varepsilon}|^{\frac{2-\widetilde{p}}{2}}\left(\int_{\Omega_{\varepsilon}}|\nabla
\phi|^2~dy\right)^{\frac{\widetilde{p}}{2}}+|\sigma^{\varepsilon}_k|^{\frac{\widetilde{p}}{2}}\left(\int_{\Omega_{\varepsilon}}|\phi|^{\widetilde{p}}~dy\right)\right]\end{equation}
where $\widetilde{p}>2$ is from $(\ref{eqn13})$. Recall that by
G{\aa}rding's inequality (\ref{eqn1111}) and since $\phi$ is an eigenfunction, we have
\begin{align}
C_1\int_{\Omega_{\varepsilon}}|\nabla
\phi|^{2}~dy&\leq\int_{\Omega_{\varepsilon}}a_{ij}^{\alpha
\beta}\phi^{\alpha}_i\phi^{\beta}_j~dy+C_2\int_{\Omega_{\varepsilon}}|\phi|^{2}~dy\notag\\
&\leq
C(1+|\sigma^{\varepsilon}_k|)\int_{\Omega_{\varepsilon}}|\phi|^{2}~dy\notag\\
&\label{eqnaaa4}\leq C(1+|\sigma^{\varepsilon}_k|),
\end{align}
the last line owing to the normalization of the eigenfunctions.
Next, we will consider $n\geq3$ and estimate
$$\int_{\Omega_{\varepsilon}}|\phi|^{\widetilde{p}}~dy.$$

Using Sobolev's inequality and (\ref{eqnaaa4}), we have

\begin{align}
\left(\int_{\Omega_{\varepsilon}}|
\phi|^{\frac{2n}{n-2}}~dy\right)^{\frac{n-2}{2n}}&\leq
C\left(\int_{\Omega_{\varepsilon}}|\nabla
\phi|^{2}~dy\right)^{\frac{1}{2}}\notag\\
&\label{eqnaaa3}\leq C(1+|\sigma^{\varepsilon}_k|^{\frac{1}{2}}).
\end{align}
Also, by H\"{o}lder's inequality, we have
\begin{align*}
\left(\int_{\Omega_{\varepsilon}}|
\phi|^{\widetilde{p}}~dy\right)^{\frac{1}{\widetilde{p}}}&\leq\left(\int_{\Omega_{\varepsilon}}|
\phi|^{2}~dy\right)^{\frac{1-t}{2}}\left(\int_{\Omega_{\varepsilon}}|
\phi|^{\frac{2n}{n-2}}~dy\right)^{\frac{t(n-2)}{2n}}\\
\end{align*}
where $t$ satisfies
$$\frac{1}{\widetilde{p}}=\frac{1-t}{2}+\frac{t(n-2)}{2n}.$$
From this inequality and (\ref{eqnaaa3}), it follows that
\begin{align*}\left(\int_{\Omega_{\varepsilon}}|
\phi|^{\widetilde{p}}~dy\right)^{\frac{1}{\widetilde{p}}}&\leq
C\left(1+|\sigma^{\varepsilon}_k|^{\frac{t}{2}}\right)\\
&=C\left(1+|\sigma^{\varepsilon}_k|^{\frac{n(\widetilde{p}-2)}{4\widetilde{p}}}\right).\\
\end{align*}
 Now, using this inequality along with (\ref{eqnaaa5}), (\ref{eqnaaa4}), and $(\ref{eqn14})$,
 we obtain
\begin{align*}\int_{\Omega_{\varepsilon}}|\nabla \phi|^{\widetilde{p}}~dy&\leq
C\left[\left(1+|\sigma^{0}_k|\right)^{\frac{\widetilde{p}}{2}}+|\sigma^{0}_k|^{\frac{\widetilde{p}}{2}}\left(1+|\sigma^{0}_k|^{\frac{n(\widetilde{p}-2)}{4}}\right)\right]\\
&\leq
C\left[\left|\sigma^{0}_k\right|^{\frac{2\widetilde{p}+n(\widetilde{p}-2)}{4}}+\left|\sigma^{0}_k\right|^{\frac{\widetilde{p}}{2}}+1\right].
\end{align*}
This completes the proof for $n\geq3$.

If $n=2$, then from Sobolev's inequality, H\"{o}lder's inequality,
and (\ref{eqnaaa4}), we have
\begin{align*}\left(\int_{\Omega_{\varepsilon}}|\phi|^{q^*}~dy\right)^{\frac{1}{q^*}}&\leq \frac{C}{(2-q)^{\frac{1}{2}}}\left(\int_{\Omega_{\varepsilon}}|\nabla
\phi|^{q}~dy\right)^{\frac{1}{q}}\\
&\leq
\frac{C}{(2-q)^{\frac{1}{2}}}\left(\int_{\Omega_{\varepsilon}}|\nabla
\phi|^{2}~dy\right)^{\frac{1}{2}}|\Omega_{\varepsilon}|^{\frac{1}{q^*}}\\
&\leq
\frac{C}{(2-q)^{\frac{1}{2}}}\left(1+|\sigma^{\varepsilon}_k|^{\frac{1}{2}}\right)
\end{align*} where $q^*=\frac{2q}{q-2}$ is the Sobolev conjugate of
$q$. Then, again applying H\"{o}lder's inequality, we obtain
\begin{align*}\left(\int_{\Omega_{\varepsilon}}|\phi|^{\widetilde{p}}~dy\right)^{\frac{1}{\widetilde{p}}}&\leq  \frac{C}{(2-q)^{\frac{t}{2}}}\left(1+|\sigma^{\varepsilon}_k|^{\frac{t}{2}}\right)\\
&=
\frac{C}{(2-q)^{\frac{(\widetilde{p}-q)}{\widetilde{p}q}}}\left(1+|\sigma^{\varepsilon}_k|^{\frac{(\widetilde{p}-q)}{\widetilde{p}q}}\right).
\end{align*}
Now using (\ref{eqnaaa5}), (\ref{eqnaaa4}), and $(\ref{eqn14})$,
 we obtain
 \begin{align*}\int_{\Omega_{\varepsilon}}|\nabla \phi|^{\widetilde{p}}~dy&\leq
\frac{C}{(2-q)^{\frac{(\widetilde{p}-q)}{q}}}\left[\left(1+|\sigma^{0}_k|\right)^{\frac{\widetilde{p}}{2}}+|\sigma^{0}_k|^{\frac{\widetilde{p}}{2}}\left(1+|\sigma^{0}_k|^{\frac{(\widetilde{p}-q)}{q}}\right)\right]\\
&\leq
\frac{C}{(2-q)^{\frac{(\widetilde{p}-q)}{q}}}\left[\left|\sigma^{0}_k\right|^{\frac{q\widetilde{p}+2(\widetilde{p}-q)}{2q}}+\left|\sigma^{0}_k\right|^{\frac{\widetilde{p}}{2}}+1\right].
\end{align*}

\end{proof}

\begin{lemma} \label{lemma2} For the eigenfunction $\phi^{\varepsilon}_{k}$, $J\leq k\leq J+m_J-1$, and any $w\in H^1_0(\Omega_{0})$, we have the
following estimate:
\begin{equation}\left| \int_{\Omega_{0}}a_{ij}^{\alpha
\beta}(\eta_{\varepsilon}\phi^{\varepsilon}_{k})^{\alpha}_iw^{\beta}_j~dy-\sigma^{\varepsilon}_k
\int_{\Omega_{0}}(\eta_{\varepsilon}\phi^{\varepsilon}_{k})^{\alpha}w^{\alpha}~dy\right|\leq
C\varepsilon^{\frac{n(\widetilde{p}-2)}{2\widetilde{p}}}\|w\|_1
\end{equation}
where $\|w\|_1$ is from Lemma \ref{lemma1}  with $\displaystyle
q(f,g)=\int_{\Omega_{0}}a_{ij}^{\alpha \beta}f^{\alpha}_i
g^{\beta}_j~dy$, and $C$ only depends on $|\Omega_0|$, $n$,
$\sigma^{0}_J$, and is independent of $\varepsilon$.
\end{lemma}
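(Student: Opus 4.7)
The plan is to take $\psi = \eta_\varepsilon w$ as a test function in the weak eigenvalue equation for $\phi := \phi_k^\varepsilon$ on $\Omega_\varepsilon$. Since $w \in H_0^1(\Omega_0)$ extends by zero to $H_0^1(\Omega_\varepsilon)$ and $\eta_\varepsilon \in C_c^\infty(\mathbb{R}^n)$, we have $\psi \in H_0^1(\Omega_\varepsilon)$; moreover $\eta_\varepsilon w$ vanishes on $T_\varepsilon$, so every integral collapses onto $\Omega_0$. Applying the Leibniz rule $(\eta_\varepsilon w)_j^\beta = \eta_\varepsilon w_j^\beta + (\eta_\varepsilon)_j w^\beta$ on the weak formulation and $(\eta_\varepsilon \phi)_i^\alpha = \eta_\varepsilon \phi_i^\alpha + (\eta_\varepsilon)_i \phi^\alpha$ in the target expression, the common term $\int a_{ij}^{\alpha\beta} \eta_\varepsilon \phi_i^\alpha w_j^\beta \, dy$ cancels, and the quantity to bound reduces to
$$T := \int_{\Omega_0} a_{ij}^{\alpha\beta}[(\eta_\varepsilon)_i \phi^\alpha w_j^\beta - \phi_i^\alpha (\eta_\varepsilon)_j w^\beta]\, dy.$$
Both pieces of $T$ are supported in $B_\varepsilon \cup \widetilde{B}_\varepsilon$, where $|\nabla \eta_\varepsilon| \leq C/\varepsilon$ and the Lebesgue measure is bounded by a constant multiple of $\varepsilon^n$. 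Pulling out the uniform bound $M$ on the coefficients, we get
$$|T| \leq \frac{CM}{\varepsilon} \int_{B_\varepsilon \cup \widetilde{B}_\varepsilon} \bigl(|\nabla \phi||w| + |\phi||\nabla w|\bigr) dy.$$

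Next, I would estimate each piece by a three-factor Hölder inequality that fully exploits the higher integrability of $\nabla \phi$ furnished by Proposition \ref{prop3}. For the piece $\int |\nabla \phi||w|$, apply Hölder with exponents $\widetilde{p}$, $p^* := 2n/(n-2)$, and the dual exponent $s$ determined by $1/\widetilde{p} + 1/p^* + 1/s = 1$. Then Sobolev embedding bounds $\|w\|_{L^{p^*}}$ by $C\|w\|_1$, Proposition \ref{prop3} bounds $\|\nabla \phi\|_{L^{\widetilde{p}}(\Omega_\varepsilon)}$ by a constant depending only on $\sigma_J^0$ and $|\Omega_0|$, and the remaining factor contributes $|B_\varepsilon \cup \widetilde{B}_\varepsilon|^{1/s} \lesssim \varepsilon^{n/s}$. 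A direct computation gives
$$\frac{n}{s} - 1 = \frac{n}{2} - \frac{n}{\widetilde{p}} = \frac{n(\widetilde{p}-2)}{2\widetilde{p}},$$
which is exactly the claimed exponent. For the piece $\int |\phi||\nabla w|$ the roles are symmetric: one controls $\|\phi\|_{L^{\widetilde{p}^*}}$ via Sobolev embedding by $\|\nabla \phi\|_{L^{\widetilde{p}}}$ (and hence by Proposition \ref{prop3}), pairs with $\|\nabla w\|_{L^2} \leq \|w\|_1$, and uses the corresponding measure factor to obtain the same power $\varepsilon^{n(\widetilde{p}-2)/(2\widetilde{p})}$.

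The main obstacle is that the factor $1/\varepsilon$ coming from $|\nabla \eta_\varepsilon|$ almost exactly cancels the small measure $\varepsilon^n$ of the support of $\nabla \eta_\varepsilon$: a naive $L^2$--$L^2$ Hölder estimate would yield only $O(\|w\|_1)$, with no decay in $\varepsilon$. The gain comes entirely from the slight super-$L^2$ integrability of $\nabla \phi$ provided by the reverse Hölder inequality of Theorem \ref{revthm1}, combined with Sobolev embedding on $w$; the three-exponent Hölder must be arranged so that both sources contribute a positive fractional power of $\varepsilon$. The case $n = 2$ requires a minor modification, using $L^q$ Sobolev embedding for $q$ large in place of $p^*$, but the structure of the argument and the final exponent are unchanged.
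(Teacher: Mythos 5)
Your proposal is correct and follows the paper's proof in all essentials: the same cancellation obtained by testing the weak formulation with $\eta_{\varepsilon}w$, the same reduction to the commutator term supported in $B_{\varepsilon}\cup\widetilde{B}_{\varepsilon}$, and the same key input $\|\nabla \phi^{\varepsilon}_{k}\|_{L^{\widetilde{p}}(\Omega_{\varepsilon})}\leq C$ from Proposition \ref{prop3}. The only (minor) divergence is in how the $1/\varepsilon$ from $|\nabla \eta_{\varepsilon}|$ is beaten: you use a three-exponent H\"{o}lder with Sobolev embedding on $w$ (and on $\phi$), whereas the paper first applies Poincar\'{e}'s inequality to $\phi^{\varepsilon}_{k}$ on the small balls together with G{\aa}rding's inequality for $w$, and then a two-exponent H\"{o}lder; both routes yield the identical exponent $\varepsilon^{\frac{n(\widetilde{p}-2)}{2\widetilde{p}}}$.
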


\begin{proof}
First, recall that $w$ is extended to be 0 outside $\Omega_{0}$ and
$\phi^{\varepsilon}_{k}$ is extended to be 0 in $(B_{\varepsilon}\cup
\widetilde{B}_{\varepsilon})\cap \Omega_{\varepsilon}^c$. We
have
\begin{align*}
&\left|\int_{\Omega_{0}}a_{ij}^{\alpha
\beta}(\eta_{\varepsilon}\phi^{\varepsilon}_{k})^{\alpha}_iw^{\beta}_j~dy-\sigma^{\varepsilon}_k
\int_{\Omega_{0}}(\eta_{\varepsilon}\phi^{\varepsilon}_{k})^{\alpha}
w^{\alpha}~dy\right|\\& \hspace{3cm}\leq\left|\int_{\Omega_{0}}a_{ij}^{\alpha
\beta}[(\eta_{\varepsilon})_i(\phi^{\varepsilon}_{k})^{\alpha}w^{\beta}_j-(\eta_{\varepsilon})_j(\phi^{\varepsilon}_{k})^{\alpha}w^{\beta}]~dy\right|\\
&\hspace{3.5cm}+\left|\int_{\Omega_{\varepsilon}}a_{ij}^{\alpha\beta}(\phi^{\varepsilon}_{k})^{\alpha}_i(\eta_{\varepsilon}w)^{\beta}_j~dy-\sigma^{\varepsilon}_k
\int_{\Omega_{\varepsilon}}(\phi^{\varepsilon}_{k})^{\alpha}(\eta_{\varepsilon}w)^{\alpha}~dy\right|\\
&\hspace{3cm}=\left|I+II\right|\\&\hspace{3.5cm}+\left|III+IV\right|.
\end{align*}
First, since $\phi^{\varepsilon}_{k}$ is an eigenfunction with
eigenvalue $\sigma^{\varepsilon}_k$, we have that $III+IV=0$. Also,
by H\"{o}lder's inequality and Poincar\'{e}'s inequality, we have
\begin{align*}
|I+II|&\leq
\frac{C}{\varepsilon}\|\phi^{\varepsilon}_{k}\|_{L^2(B_{\varepsilon}\cup
\widetilde{B}_{\varepsilon})}\left(\|\nabla w\|_{L^2(B_{\varepsilon}\cup
\widetilde{B_{\varepsilon}})}+\|w\|_{L^2(B_{\varepsilon}\cup
\widetilde{B_{\varepsilon}})}\right)\\
&\leq C\|\nabla \phi^{\varepsilon}_{k}\|_{L^2(B_{\varepsilon}\cup
\widetilde{B}_{\varepsilon})}\|w\|_1
\end{align*}
where we have used G{\aa}rding's inequality (\ref{eqn1111}) on the
last line for $w$. Thus, from H\"{o}lder's inequality and
Proposition \ref{prop3},
\begin{align*}
|I+II| &\leq
C\varepsilon^{\frac{n(\widetilde{p}-2)}{2\widetilde{p}}}\|\nabla
\phi^{\varepsilon}_{k}\|_{L^{\widetilde{p}}(\Omega_{\varepsilon})}\|w\|_1\\
&\leq
C\varepsilon^{\frac{n(\widetilde{p}-2)}{2\widetilde{p}}}\|w\|_1.
\end{align*}
Since $\sigma_k^{0}=\sigma_J^{0}$,  the proof of the lemma is concluded.
\end{proof}
If we choose an interval $I$ around $\sigma_k^{0}$ such that
$\sigma_k^{\varepsilon}\in I$, and let
$\displaystyle q(f,g)=\int_{\Omega_{0}}a_{ij}^{\alpha
\beta}f^{\alpha}_i g^{\beta}_j~dy$ and
$f=\eta_{\varepsilon}\phi^{\varepsilon}_{k}$, we aim to satisfy the
hypotheses for part 1 of Lemma \ref{lemma1}. In order to do this, we need $\|\eta_{\varepsilon}\phi^{\varepsilon}_k\|_{L^2(\Omega_0)}$ to be bounded away from 0. To achieve this, we
start with the following well-known proposition.
\begin{prop} \label{prop4} If $A$ is an $N\times N$ matrix and $v$ is a $N\times 1$ vector such that $Av=0$ and $\displaystyle \sum_{i\neq l}^N|A_{li}|<|A_{ll}|~~~\textrm{for each}~l=1,...,N$,
then $v=0$. \end{prop}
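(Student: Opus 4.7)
The plan is to prove this by contradiction using the standard maximum-entry trick that establishes invertibility of strictly diagonally dominant matrices. Suppose toward a contradiction that $v \neq 0$. Then there exists an index $l \in \{1, \dots, N\}$ realizing the maximum $|v_l| = \max_{1 \leq i \leq N} |v_i|$, and this maximum is strictly positive.

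Next I would read off row $l$ of the equation $Av = 0$, which says
\[
A_{ll} v_l = -\sum_{i \neq l} A_{li} v_i.
\]
Taking absolute values and applying the triangle inequality on the right, then bounding $|v_i| \leq |v_l|$, gives
\[
|A_{ll}|\, |v_l| \;\leq\; \sum_{i \neq l} |A_{li}|\, |v_i| \;\leq\; |v_l| \sum_{i \neq l} |A_{li}|.
\]
Since $|v_l| > 0$, I can divide through to obtain $|A_{ll}| \leq \sum_{i \neq l} |A_{li}|$, which directly contradicts the strict diagonal dominance hypothesis $\sum_{i \neq l} |A_{li}| < |A_{ll}|$ for this particular $l$. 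Hence $v = 0$.

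There is essentially no obstacle here: the whole argument hinges on the observation that picking the index of a maximum-modulus entry lets us pull $|v_l|$ out of the sum, and the strict inequality in the hypothesis is exactly what closes the contradiction. The proof is short and purely algebraic, with no analytic input needed from the preceding sections.
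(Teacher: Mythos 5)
Your proof is correct; this is the standard Levy--Desplanques argument for strictly diagonally dominant matrices, and the paper itself states the proposition as well known without supplying any proof, so yours is exactly the argument the author is implicitly invoking. No gaps: choosing the index of a maximum-modulus entry and dividing by $|v_l|>0$ is precisely what makes the strict inequality in the hypothesis yield the contradiction.
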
The next proposition shows that the functions
$\{\eta_{\varepsilon}\phi^{\varepsilon}_k\}_{k=J}^{J+m_J-1}$ are
almost orthonormal.
\begin{prop} \label{prop5} For any $\varepsilon>0$ and $l,k\in \mathbb{N}$, $(J\leq l,k \leq J+m_J-1)$, if $\phi_k=\phi^{\varepsilon}_k$, we have the following estimates:
\begin{equation} \label{eqn17} \int_{\Omega_{\varepsilon}}\eta_{\varepsilon}^2|
\phi_k|^2~dy\geq
1-C\varepsilon^{\frac{d(\widetilde{p}-2)}{\widetilde{p}}}
\end{equation}
\begin{equation} \label{eqn18}  \left|\int_{\Omega_{\varepsilon}}\eta_{\varepsilon}^2
\phi_k \cdot \phi_l~dy\right|\leq
C\varepsilon^{\frac{d(\widetilde{p}-2)}{\widetilde{p}}}
~~\textrm{if}~k\neq l
\end{equation} where $C$ only depends on $|\Omega_0|$, $n$, and $\sigma^{0}_J$, and is independent of $\varepsilon$.\end{prop}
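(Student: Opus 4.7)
The plan is to exploit the $L^2(\Omega_{\varepsilon})$-orthonormality of the family $\{\phi^{\varepsilon}_k\}$ together with a uniform-in-$\varepsilon$ bound on $\|\phi_k^{\varepsilon}\|_{L^{\widetilde{p}}}$ for the $\widetilde{p}>2$ of Theorem \ref{revthm1}. The key geometric observation is that $1-\eta_{\varepsilon}^2$ is supported in the set
$$S:=\bigl(T_{\varepsilon}\cup B_{\varepsilon}\cup \widetilde{B}_{\varepsilon}\bigr)\cap\Omega_{\varepsilon},$$
which, by (\ref{eqn12}) together with $|B_{\varepsilon}|+|\widetilde{B}_{\varepsilon}|\le C\varepsilon^{n}\le C\varepsilon^{d}$ (for $\varepsilon\le\varepsilon_1<1$ and $d\le n$), satisfies $|S|\le C\varepsilon^{d}$.

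For (\ref{eqn17}), I would use the normalization (\ref{eqn10}) to write
$$\int_{\Omega_{\varepsilon}}\eta_{\varepsilon}^2|\phi_k|^2\,dy=1-\int_{\Omega_{\varepsilon}}(1-\eta_{\varepsilon}^2)|\phi_k|^2\,dy\ge 1-\int_{S}|\phi_k|^2\,dy,$$
and apply Hölder with conjugate exponents $\widetilde{p}/(\widetilde{p}-2)$ and $\widetilde{p}/2$ to obtain
$$\int_{S}|\phi_k|^2\,dy\le |S|^{(\widetilde{p}-2)/\widetilde{p}}\|\phi_k\|_{L^{\widetilde{p}}(\Omega_{\varepsilon})}^{2}\le C\varepsilon^{d(\widetilde{p}-2)/\widetilde{p}}\|\phi_k\|_{L^{\widetilde{p}}(\Omega_{\varepsilon})}^{2}.$$
For (\ref{eqn18}), the orthogonality $\int_{\Omega_{\varepsilon}}\phi_k\cdot\phi_l\,dy=0$ for $k\ne l$ gives
$$\int_{\Omega_{\varepsilon}}\eta_{\varepsilon}^2\phi_k\cdot\phi_l\,dy=-\int_{S}(1-\eta_{\varepsilon}^2)\phi_k\cdot\phi_l\,dy,$$
after which Cauchy--Schwarz on $S$ reduces the task to the product $\bigl(\int_S|\phi_k|^2\bigr)^{1/2}\bigl(\int_S|\phi_l|^2\bigr)^{1/2}$, and each factor is handled exactly as above.

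The one genuine step is the uniform bound $\|\phi^{\varepsilon}_k\|_{L^{\widetilde{p}}(\Omega_{\varepsilon})}\le C(|\Omega_0|,n,\sigma^0_J)$. Since $\widetilde{p}$ in Theorem \ref{revthm1} may be taken close enough to $2$ that $\widetilde{p}\le 2^{\ast}:=2n/(n-2)$ when $n\ge 3$ (and any $\widetilde{p}<\infty$ is permissible when $n=2$), the Sobolev embedding $H^{1}_{0}(\Omega_{\varepsilon})\hookrightarrow L^{\widetilde{p}}(\Omega_{\varepsilon})$ reduces the matter to bounding $\|\nabla\phi^{\varepsilon}_k\|_{L^{2}}$. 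Testing the weak formulation (\ref{meqn40}) against $\phi^{\varepsilon}_k$ itself, combining with Gårding's inequality (\ref{eqn1111}) and the normalization (\ref{eqn10}), yields $\|\nabla\phi^{\varepsilon}_k\|_{L^{2}}^{2}\le C(1+\sigma^{\varepsilon}_k)$, which by Proposition \ref{prop2} is bounded by $C(1+\sigma^0_J)$. This is the only place where the claimed dependence on $\sigma^0_J$ enters; the rest is essentially Hölder, orthogonality, and the small-measure bound $|S|\le C\varepsilon^{d}$, and I do not anticipate any serious obstacle beyond tracking the constants through these manipulations.
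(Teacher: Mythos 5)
Your proof is correct, and its skeleton --- isolating the support of $1-\eta_{\varepsilon}^2$ in $S=(T_{\varepsilon}\cup B_{\varepsilon}\cup\widetilde{B}_{\varepsilon})\cap\Omega_{\varepsilon}$ with $|S|\leq C\varepsilon^{d}$, applying H\"{o}lder with exponents $\widetilde{p}/2$ and $\widetilde{p}/(\widetilde{p}-2)$, and exploiting normalization and orthogonality --- is the same as the paper's. The one genuine divergence is the source of the $L^{\widetilde{p}}$ bound: the paper controls $\int_{S}|\phi_k|^2$ by $\|\nabla\phi_k\|_{L^{\widetilde{p}}(\Omega_{\varepsilon})}^2\,|S|^{(\widetilde{p}-2)/\widetilde{p}}$ using Poincar\'{e}'s inequality together with Proposition \ref{prop3}, i.e.\ the uniform $L^{\widetilde{p}}$ bound on the \emph{gradient} that rests on the reverse H\"{o}lder inequality, whereas you bound $\|\phi_k\|_{L^{\widetilde{p}}}$ directly via the Sobolev embedding and the G{\aa}rding/energy estimate $\|\nabla\phi_k\|_{L^2}^2\leq C(1+\sigma_k^{\varepsilon})\leq C(1+\sigma_J^0)$. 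Your route is legitimate provided $\widetilde{p}\leq 2n/(n-2)$ for $n\geq 3$, which is exactly the constraint the paper already imposes implicitly when it interpolates $\|\phi\|_{L^{\widetilde{p}}}$ between $L^2$ and $L^{2n/(n-2)}$ in the proof of Proposition \ref{prop3}, so nothing is lost. What your version buys is the observation that Proposition \ref{prop5} does not actually need the reverse H\"{o}lder machinery; that machinery remains indispensable only where the gradient itself must be integrated over the small set, as in Lemma \ref{lemma2}. A minor cosmetic difference: your single Cauchy--Schwarz over $S$ merges the paper's two terms $I$ and $II$ in the proof of (\ref{eqn18}) and directly yields the dominant rate $\varepsilon^{d(\widetilde{p}-2)/\widetilde{p}}$, since $d\leq n$.
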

\begin{proof}
We start by showing $(\ref{eqn17})$. Since the eigenfunctions are
normalized, we obtain for each $k$,
\begin{align*}1-\int_{\Omega_{\varepsilon}}\eta_{\varepsilon}^2|\phi_k|^2~dy&=\int_{\Omega_{\varepsilon}}(1-\eta_{\varepsilon}^2)|\phi_k|^2~dy\\
&=\int_{T_{\varepsilon} \cup B_{\varepsilon}\cup
\widetilde{B}_{\varepsilon}}(1-\eta_{\varepsilon}^2)|\phi_k|^2~dy\\
&\leq \|\nabla
\phi_k\|_{L^{\widetilde{p}}(\Omega_{\varepsilon})}^2|T_{\varepsilon}
\cup B_{\varepsilon}\cup
\widetilde{B}_{\varepsilon}|^{\frac{\widetilde{p}-2}{\widetilde{p}}}\\
&\leq C_k\varepsilon^{\frac{d(\widetilde{p}-2)}{\widetilde{p}}}
\end{align*}
where, from (\ref{eqn15}), $C_k$ depends on $\sigma_k^{0}$. Again, since $\sigma_k^{0}=\sigma_J^{0}$, we have $(\ref{eqn17})$.

\noindent Next, to show $(\ref{eqn18})$, we have
\begin{align*}
\left|\int_{\Omega_{\varepsilon}}\eta_{\varepsilon}^2
\phi_k\cdot\phi_l~dy\right|&\leq\left|\int_{B_{\varepsilon}\cup
\widetilde{B}_{\varepsilon}}\eta_{\varepsilon}^2
\phi_k\cdot\phi_l~dy\right|+\left|\int_{\Omega_{0}\backslash
(B_{\varepsilon}\cup
\widetilde{B}_{\varepsilon})}\eta_{\varepsilon}^2 \phi_k\cdot\phi_l~dy\right|\\
&=\left|\int_{B_{\varepsilon}\cup
\widetilde{B}_{\varepsilon}}\eta_{\varepsilon}^2
\phi_k\cdot\phi_l~dy\right|+\left|\int_{\Omega_{0}\backslash
(B_{\varepsilon}\cup \widetilde{B}_{\varepsilon})}
\phi_k\cdot\phi_l~dy-\int_{\Omega_{\varepsilon}}
\phi_k\cdot\phi_l~dy\right|\\
&\leq \int_{B_{\varepsilon}\cup \widetilde{B}_{\varepsilon}}
|\phi_k\cdot\phi_l|~dy+\int_{T_{\varepsilon}\cup B_{\varepsilon}\cup
\widetilde{B}_{\varepsilon}}| \phi_k\cdot\phi_l|~dy,
\end{align*}
the second inequality following since the set of eigenfunctions form
an orthogonal set in $L^2(\Omega_{\varepsilon})$. So, next by
H\"{o}lder's inequality, we get
\begin{align*}
\left|\int_{\Omega_{\varepsilon}}\eta_{\varepsilon}^2
\phi_k\cdot\phi_l~dy\right|&\leq \left(\int_{B_{\varepsilon}\cup
\widetilde{B}_{\varepsilon}}
|\phi_k|^2~dy\right)^{\frac{1}{2}}\left(\int_{B_{\varepsilon}\cup
\widetilde{B}_{\varepsilon}}
|\phi_l|^2~dy\right)^{\frac{1}{2}}\\&\hspace{.5cm}+\left(\int_{T_{\varepsilon}\cup
B_{\varepsilon}\cup \widetilde{B}_{\varepsilon}}
|\phi_k|^2~dy\right)^{\frac{1}{2}}\left(\int_{T_{\varepsilon}\cup
B_{\varepsilon}\cup \widetilde{B}_{\varepsilon}} |\phi_l|^2~dy\right)^{\frac{1}{2}}\\
&= I+II.
\end{align*}
Now, from Poincar\'{e}'s inequality and $(\ref{eqn15})$, we get
\begin{align*}
I&\leq\left[\left(\int_{B_{\varepsilon}\cup
\widetilde{B}_{\varepsilon}}
|\phi_k|^{\widetilde{p}}~dy\right)^{\frac{2}{\widetilde{p}}}|B_{\varepsilon}\cup
\widetilde{B}_{\varepsilon}|^{\frac{\widetilde{p}-2}{\widetilde{p}}}\right]^{\frac{1}{2}}\left[\left(\int_{B_{\varepsilon}\cup
\widetilde{B}_{\varepsilon}}
|\phi_l|^{\widetilde{p}}~dy\right)^{\frac{2}{\widetilde{p}}}|B_{\varepsilon}\cup
\widetilde{B}_{\varepsilon}|^{\frac{\widetilde{p}-2}{\widetilde{p}}}\right]^{\frac{1}{2}}\\
&\leq\|\nabla \phi_k\|_{L^{\widetilde{p}}(\Omega_{\varepsilon})}\varepsilon^{\frac{n(\widetilde{p}-2)}{2\widetilde{p}}}\|\nabla\phi_l\|_{L^{\widetilde{p}}(\Omega_{\varepsilon})}\varepsilon^{\frac{n(\widetilde{p}-2)}{2\widetilde{p}}}    \\
&\leq
C_k\varepsilon^{\frac{n(\widetilde{p}-2)}{2\widetilde{p}}}C_l\varepsilon^{\frac{n(\widetilde{p}-2)}{2\widetilde{p}}}
\end{align*}
where $C_k$ again depends on $\sigma_k^{0}$ and $C_l$ depends on
$\sigma_l^{0}$. Thus, we have
\begin{equation} \label{meqn10} I\leq
C\varepsilon^{\frac{n(\widetilde{p}-2)}{\widetilde{p}}}\end{equation}
where $C$ depends only on $|\Omega_0|$, $n$, and $\sigma_J^0$.
Similarly,
\begin{equation} \label{meqn11}II\leq C\varepsilon^{\frac{d(\widetilde{p}-2)}{\widetilde{p}}},\end{equation} so that the proposition is proved.
\end{proof}
Note that with the aid of Lemma \ref{lemma2} and Proposition \ref{prop5}, if $\varepsilon$ is small enough, we have satisfied the hypotheses for part 1 of Lemma \ref{lemma1} with $\displaystyle q(f,g)=$ \newline $\int_{\Omega_{0}}a_{ij}^{\alpha
\beta}f^{\alpha}_i g^{\beta}_j~dy$ and
$f=\eta_{\varepsilon}\phi^{\varepsilon}_{k}$. Here, we relabel $\varepsilon_1$ to be small enough to achieve this for any $\varepsilon\leq \varepsilon_1$, and note that $\varepsilon_1$ only depends on fixed parameters. To satisfy the hypotheses for part 2 of Lemma \ref{lemma1}, we need
an orthonormal basis. The next proposition shows that for small
$\varepsilon$, we have a basis.

\begin{prop} \label{prop6} The set $\{\eta_{\varepsilon}
\phi_k^{\varepsilon}\}_{k=J}^{N}$ forms a linearly independent set
for any $N\geq J$, for $0<\varepsilon\leq \varepsilon_0(N)$, where
$\varepsilon_0(N)$ depends on $N$.\end{prop}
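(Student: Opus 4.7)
The plan is to convert linear independence into a matrix invertibility question by testing a putative linear relation against each $\eta_\varepsilon \phi_l^\varepsilon$, and then apply the diagonal dominance criterion of Proposition \ref{prop4}. Suppose $\sum_{k=J}^N c_k \eta_\varepsilon \phi_k^\varepsilon \equiv 0$ on $\Omega_0$. First, I would pair this identity with $\eta_\varepsilon \phi_l^\varepsilon$ and integrate over $\Omega_\varepsilon$ for each $l \in \{J, \ldots, N\}$, producing the linear system $A c = 0$, where
\begin{equation*}
A_{lk} := \int_{\Omega_\varepsilon} \eta_\varepsilon^2 \, \phi_k^\varepsilon \cdot \phi_l^\varepsilon \, dy, \qquad J \leq l,k \leq N.
\end{equation*}
Thus linear independence is equivalent to the triviality of the kernel of the $(N-J+1) \times (N-J+1)$ matrix $A$.

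Next, I would read off directly from Proposition \ref{prop5} that $A$ is close to the identity: the diagonal entries obey $A_{ll} \geq 1 - C\varepsilon^{d(\widetilde p - 2)/\widetilde p}$, while the off-diagonal entries obey $|A_{lk}| \leq C\varepsilon^{d(\widetilde p - 2)/\widetilde p}$ for $k \neq l$. Since each row of $A$ contains at most $N-J$ off-diagonal terms, the total off-diagonal row sum is bounded by $(N-J)\, C \varepsilon^{d(\widetilde p - 2)/\widetilde p}$. I then choose $\varepsilon_0(N)$ small enough that
\begin{equation*}
(N-J) \, C \varepsilon^{d(\widetilde p - 2)/\widetilde p} < 1 - C \varepsilon^{d(\widetilde p - 2)/\widetilde p}
\end{equation*}
for all $0 < \varepsilon \leq \varepsilon_0(N)$, which ensures strict diagonal dominance $\sum_{k \neq l} |A_{lk}| < |A_{ll}|$ in each row.

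With this bound in hand, Proposition \ref{prop4} applied to $A$ immediately forces $c = 0$, which is exactly the desired linear independence. The only delicate point is the explicit dependence of $\varepsilon_0$ on $N$, which enters because the Gershgorin-type estimate accumulates one small term per off-diagonal entry; since $N$ is fixed in the statement this poses no real obstacle. In effect the proof is just a quantitative perturbation of the $L^2(\Omega_\varepsilon)$-orthonormality of $\{\phi_k^\varepsilon\}$, with the perturbation controlled by the $L^{\widetilde p}$-gradient bound of Proposition \ref{prop3} through Proposition \ref{prop5}.
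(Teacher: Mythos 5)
Your proof is correct and follows essentially the same route as the paper: test the linear relation against each $\eta_{\varepsilon}\phi_l^{\varepsilon}$, invoke the near-orthonormality estimates of Proposition \ref{prop5} to get strict diagonal dominance of the Gram matrix for small $\varepsilon$, and conclude via Proposition \ref{prop4}. Your explicit tracking of the $(N-J)$ factor in the off-diagonal row sum is exactly where the paper locates the dependence of $\varepsilon_0$ on $N$, so the two arguments match in every essential detail.
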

\begin{proof}
Assume
$C_J\eta_{\varepsilon}\phi_J^{\varepsilon}+...+C_{N}\eta_{\varepsilon}\phi_{N}^{\varepsilon}=0$.
Then, multiplying this equation by
$\eta_{\varepsilon}\phi_l^{\varepsilon}$, we obtain
$$\sum_{k=J}^{N}C_k\langle\eta_{\varepsilon}\phi_k^{\varepsilon},\eta_{\varepsilon}\phi_l^{\varepsilon}\rangle_{L^2(\Omega_{\varepsilon})}=0,~~l=J,...,N.$$
So, if
$A_{kl}=\langle\eta_{\varepsilon}\phi_k^{\varepsilon},\eta_{\varepsilon}\phi_l^{\varepsilon}\rangle_{L^2(\Omega_{\varepsilon})}$,
we obtain by $(\ref{eqn17})$ and $(\ref{eqn18})$ that
\begin{align*}|A_{kk}|&\geq 1-C\varepsilon^{\frac{d(\widetilde{p}-2)}{\widetilde{p}}}\\
&>C\varepsilon^{\frac{d(\widetilde{p}-2)}{\widetilde{p}}}\\
&\geq \sum_{\substack{k=J\\k\neq l}}^{N}|A_{kl}|
\end{align*}
if $\varepsilon \leq \varepsilon(N)$, where $\varepsilon(N)$ depends on $N$ due to applying $(\ref{eqn18})$ $N-J$ times. Thus, we may use Proposition
$\ref{prop4}$ to see that by setting $C=(C_J,...,C_{N})^t$, we have
$C=0$, so that the proposition is proved.
\end{proof}

Now we define $\displaystyle
J_0:L^2(\Omega_{\varepsilon})\rightarrow L^2(\Omega_0)$ to be given
by $J_0f=\eta_{\varepsilon}f$, and similarly, we define
$\displaystyle J_{\varepsilon}:L^2(\Omega_0)\rightarrow
L^2(\Omega_{\varepsilon})$ to be such that
$$J_{\varepsilon}f(x)=\begin{cases}f(x),~~~~&\textrm{if}~~~~x\in
\Omega_0\\0,~~~~&\textrm{if}~~~~x\in\Omega_{\varepsilon} \backslash
\Omega_0. \end{cases}$$ Let
$I=\left(\sigma_J^0-M\varepsilon^{\frac{n(\widetilde{p}-2)}{4\widetilde{p}}},\frac{\sigma_J^0+\sigma_{J+m_J}^0}{2}\right)$
for $M>0$ to be chosen later. Also, let $\Pi$ be the projector onto
the space spanned by the eigenfunctions corresponding to the
eigenvalues, $\{ \sigma_k^{\varepsilon}\}_{k=J}^{N}$, in $I$.  We first consider $\varepsilon=\varepsilon_1$. By Proposition \ref{prop2},
we may choose $M=M(\varepsilon_1)$ so that $\sigma_k^{\varepsilon}$ is in $I$ for
$J\leq k\leq N$, where $N\geq J+m_{J}-1$, and where $N$ depends on
$\varepsilon_1$. We next note that as $\varepsilon$ gets smaller,
we may choose $M=M(\varepsilon)$ so that the set of eigenvalues in $I$, $\{
\sigma_k^{\varepsilon}\}_{k=J}^{N_0}$, will have index $N_0$ in the
range $J+m_{J}-1 \leq N_0\leq N$ since our family
$\{\Omega_{\varepsilon}\}$ is nested. Our aim is to show that for
$\varepsilon$ small, $N_0=J+m_J-1$.

We apply Proposition \ref{prop6} to get the existence of
$\varepsilon_0(N)\leq \varepsilon_1$ so that $\{\eta_{\varepsilon}
\phi_k^{\varepsilon}\}_{k=J}^{N_0}$ is a linearly independent set for
$\varepsilon\leq \varepsilon_0(N)$ and for any $N_0$ in the range $J+m_{J}-1 \leq N_0\leq N$. Then, we choose $M=M(\varepsilon(N))$ so that $\{\eta_{\varepsilon}
\phi_k^{\varepsilon}\}_{k=J}^{N_0}$ is also a basis for the range of $J_0\Pi J_{\varepsilon}$. 
Thus, we may apply
the Gram-Schmidt process to this basis. That is, define
\begin{align*}
&f_J=\eta_{\varepsilon}\phi_J^{\varepsilon}\\
&\vdots\\
&f_k=\eta_{\varepsilon}\phi_k^{\varepsilon}-\frac{\langle
\eta_{\varepsilon}\phi_k^{\varepsilon},f_J
\rangle}{\|f_J\|^2}f_J-...-\frac{\langle
\eta_{\varepsilon}\phi_k^{\varepsilon},f_{k-1}
\rangle}{\|f_{k-1}\|^2}f_{k-1}\\
\vdots
\end{align*}
We have the following lemma:
\begin{lemma} \label{lemma4.3} Let $I$ be as defined above. For each $k$, $J\leq k\leq J+m_J-1$, we have $\displaystyle\|\Pi_{I^c}(f_k)\|_1\leq
\frac{C\varepsilon^{\frac{d(\widetilde{p}-2)}{4\widetilde{p}}}}{M},$
for $\varepsilon\leq \varepsilon(N)$, and where $M$ only depends
on $\sigma_J^{0}$, $\sigma_{J-1}^{0}$, and $\varepsilon(N)$. 
\end{lemma}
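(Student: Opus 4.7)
The plan is to apply part 1 of Lemma \ref{lemma1} with the quadratic form $q(f,g) = \int_{\Omega_0} a^{\alpha\beta}_{ij} f^\alpha_i g^\beta_j\,dy$, the vector $f = f_k$, and spectral parameter $\lambda = \sigma_k^\varepsilon$ (which lies in $I$ by our choice of $M$). Two ingredients are required: a small $\delta$ in the hypothesis $|q(f_k,g) - \sigma_k^\varepsilon \langle f_k,g\rangle| \leq \delta \|f_k\| \|g\|_1$, and the distance $a$ from $\partial I$ to the Dirichlet spectrum of $q$ on $\Omega_0$. The right endpoint of $I$ sits at fixed positive distance $(\sigma_{J+m_J}^0 - \sigma_J^0)/2$ from the spectrum, while the left endpoint $\sigma_J^0 - M\varepsilon^{n(\widetilde p - 2)/(4\widetilde p)}$ lies at distance $M\varepsilon^{n(\widetilde p - 2)/(4\widetilde p)}$ from $\sigma_J^0$, which is the binding distance once $\varepsilon \leq \varepsilon(N)$ is small enough relative to the gap $\sigma_J^0 - \sigma_{J-1}^0$; hence I can take $a = M\varepsilon^{n(\widetilde p - 2)/(4\widetilde p)}$.

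To produce $\delta$, I first unwind the Gram-Schmidt recursion to write $f_k = \eta_\varepsilon \phi_k^\varepsilon + \sum_{j=J}^{k-1} c_{kj}\,\eta_\varepsilon \phi_j^\varepsilon$. By Proposition \ref{prop5}, the Gram matrix of the $\eta_\varepsilon\phi_j^\varepsilon$'s differs from the identity by an $O(\varepsilon^{d(\widetilde p - 2)/\widetilde p})$ perturbation, which (via a short induction or direct inversion of this matrix) yields $|c_{kj}| \leq C\varepsilon^{d(\widetilde p - 2)/\widetilde p}$ for $j < k$, and $\tfrac12 \leq \|f_k\| \leq 2$ for all sufficiently small $\varepsilon$. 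I then apply Lemma \ref{lemma2} to each $\eta_\varepsilon \phi_j^\varepsilon$ and rewrite
\[
q(\eta_\varepsilon\phi_j^\varepsilon,g) - \sigma_k^\varepsilon\langle \eta_\varepsilon\phi_j^\varepsilon,g\rangle
= \bigl[q(\eta_\varepsilon\phi_j^\varepsilon,g) - \sigma_j^\varepsilon\langle \eta_\varepsilon\phi_j^\varepsilon,g\rangle\bigr] + (\sigma_j^\varepsilon - \sigma_k^\varepsilon)\langle \eta_\varepsilon\phi_j^\varepsilon,g\rangle.
\]
The bracketed term is $O(\varepsilon^{n(\widetilde p - 2)/(2\widetilde p)})\|g\|_1$ by Lemma \ref{lemma2}, and since $\sigma_j^\varepsilon,\sigma_k^\varepsilon$ both lie in $[\sigma_J^0 - M\varepsilon^{n(\widetilde p - 2)/(4\widetilde p)}, \sigma_J^0]$ (the upper bound from Proposition \ref{prop2}), the cross term is $O(M\varepsilon^{n(\widetilde p - 2)/(4\widetilde p)})\|g\|_1$. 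Combining these bounds with the small Gram-Schmidt coefficients shows that the dominant contribution is the $j = k$ term from Lemma \ref{lemma2}, so $\delta \leq C\varepsilon^{n(\widetilde p - 2)/(2\widetilde p)}$.

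Finally, part 1 of Lemma \ref{lemma1} gives $\|\Pi_{I^c}(f_k)\|_1 \leq (C\delta/a)\|f_k\| \leq C\varepsilon^{n(\widetilde p - 2)/(4\widetilde p)}/M$, which, since $d \leq n$, implies the stated bound $C\varepsilon^{d(\widetilde p - 2)/(4\widetilde p)}/M$. The principal obstacle is the careful accounting in the middle step: one must verify that the cascade of cross terms produced by the Gram-Schmidt expansion (each a product of a small coefficient $c_{kj}$ with either a Lemma \ref{lemma2} error or a $\sigma_j^\varepsilon - \sigma_k^\varepsilon$ difference) remains dominated by the principal $j = k$ contribution, and track how $M$ — determined by $\sigma_J^0$, $\sigma_{J-1}^0$, and $\varepsilon(N)$ — enters through the denominator $a$.
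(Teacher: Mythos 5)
Your proposal follows essentially the same route as the paper's (very terse) proof: Gram--Schmidt applied to $\{\eta_{\varepsilon}\phi_k^{\varepsilon}\}$, Proposition \ref{prop5} to control the coefficients and norms, Lemma \ref{lemma2} for the approximate eigen-equation, and Lemma \ref{lemma1} part 1 with $a=M\varepsilon^{\frac{n(\widetilde{p}-2)}{4\widetilde{p}}}$ coming from the left endpoint of $I$. One minor imprecision: your claim that the $j=k$ term dominates so that $\delta\leq C\varepsilon^{\frac{n(\widetilde{p}-2)}{2\widetilde{p}}}$ need not hold when $d$ is small relative to $n$ (the cross term $c_{kj}(\sigma_j^{\varepsilon}-\sigma_k^{\varepsilon})$ contributes $O\bigl(M\varepsilon^{\frac{d(\widetilde{p}-2)}{\widetilde{p}}+\frac{n(\widetilde{p}-2)}{4\widetilde{p}}}\bigr)$), but after dividing by $a$ that contribution is $O\bigl(\varepsilon^{\frac{d(\widetilde{p}-2)}{\widetilde{p}}}\bigr)$, which is still within the stated bound, so the conclusion is unaffected.
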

\begin{proof}
Following the previous arguments, when $\varepsilon=\varepsilon(N)$, we find $M=M(\varepsilon(N))$ so that $\{\eta_{\varepsilon}
\phi_k^{\varepsilon}\}_{k=J}^{N_0}$ is a basis for the range of $J_0\Pi J_{\varepsilon}$, and then apply the Gram-Schmidt process to this basis. We
note the dependence on $\sigma_J^{0}$ and $\sigma_{J-1}^{0}$ is so
that we only have 1 eigenvalue (with respect to $\Omega_0$) in $I$. So, defining
$q(f,g)=\int_{\Omega_{0}}a_{ij}^{\alpha \beta}f^{\alpha}_i
g^{\beta}_j~dy$, we may apply Lemma \ref{lemma2}, Proposition \ref{prop5}, and then Lemma
\ref{lemma1} (part 1) to obtain
$$\|\Pi_{I^c}(f_J)\|_1\leq
\frac{C\varepsilon(N)^{\frac{d(\widetilde{p}-2)}{4\widetilde{p}}}}{M(\varepsilon(N))}$$
where $C$ depends on $|\Omega_0|$, $n$, $\sigma_J^{0}$, and
$\sigma_{J+m_J}^{0}$. Then, from Proposition \ref{prop5}, Lemma
\ref{lemma2}, and properties of the norm, we get the result for $\varepsilon(N)$ and $J\leq k \leq J+m_J-1$. Then, for $\varepsilon\leq \varepsilon(N)$, we may repeat this argument to get the result with $\varepsilon(N)$ replaced with $\varepsilon$ and $M(\varepsilon(N)$ replaced with $M(\varepsilon)$. But, since $M(\varepsilon(N))\leq M(\varepsilon)$, we obtain the desired result for $\varepsilon \leq \varepsilon(N)$.
\end{proof}
We now let $E=\textrm{span}\{\phi^{\varepsilon}_k\}_{k=J}^{J+m_J-1}$. Also, let $\Pi_I$ be the spectral projector corresponding to the eigenvalue $\sigma_J^0$ and $\Pi_E$ be the spectral projector onto $E$.
\begin{corollary} \label{cor1} We have $\displaystyle\|\Pi_I-J_0\Pi_E
J_{\varepsilon}\|_{{\cal L}\{L^2(\Omega_0)\}}\leq
\frac{C\varepsilon^{\frac{d(\widetilde{p}-2)}{4\widetilde{p}}}}{M}$,
for $\varepsilon\leq \varepsilon(N)$, where $M$ only depends on
$\sigma_J^{0}$, $\sigma_{J-1}^{0}$, and $\varepsilon(N)$. Consequently, for some $\varepsilon(J)$, $N_0=J+m_J-1$ when $\varepsilon \leq \varepsilon(J)$.
\end{corollary}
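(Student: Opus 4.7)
My plan is to invoke part 2 of Lemma \ref{lemma1} with the orthonormal family obtained by normalizing the Gram--Schmidt vectors $\{f_k\}_{k=J}^{J+m_J-1}$. First I normalize: by Proposition \ref{prop5}, $\{\eta_\varepsilon\phi_k^\varepsilon\}_{k=J}^{J+m_J-1}$ is almost orthonormal with defect $O(\varepsilon^{d(\widetilde p-2)/\widetilde p})$, so each $\|f_k\|_{L^2(\Omega_0)}^2 = 1 + O(\varepsilon^{d(\widetilde p-2)/\widetilde p})$. Combining this with the uniform $\|\cdot\|_1$--bound on $f_k$ supplied by Proposition \ref{prop3} together with G{\aa}rding's inequality (\ref{eqn1111}), the estimate of Lemma \ref{lemma4.3} survives the passage to $\hat f_k := f_k/\|f_k\|_{L^2(\Omega_0)}$:
$$\|\Pi_{I^c}(\hat f_k)\|_1 \leq \frac{C\varepsilon^{d(\widetilde p-2)/(4\widetilde p)}}{M},\qquad J\leq k\leq J+m_J-1.$$

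By the choice of $I$, the right endpoint is strictly below $\sigma_{J+m_J}^0$ and, for $\varepsilon\leq\varepsilon_1$, the left endpoint remains above $\sigma_{J-1}^0$, so $\sigma_J^0$ is the unique Dirichlet eigenvalue of $L$ on $\Omega_0$ lying in $I$ and $\dim E(I)=m_J$. Part 2 of Lemma \ref{lemma1}, applied to the orthonormal family $\{\hat f_k\}_{k=J}^{J+m_J-1}$, then yields
$$\bigl\|\Pi_I - P_{\hat E}\bigr\|_{\mathcal{L}\{L^2(\Omega_0)\}} \leq \frac{C\varepsilon^{d(\widetilde p-2)/(4\widetilde p)}}{M},$$
where $P_{\hat E}$ is the orthogonal projector onto $\hat E=\textrm{span}\{\hat f_k\}_{k=J}^{J+m_J-1}$. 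To replace $P_{\hat E}$ by $J_0\Pi_E J_\varepsilon$ I note that both are rank-$m_J$ operators with range in $\hat E$; written in the basis $\{\eta_\varepsilon\phi_k^\varepsilon\}_{k=J}^{J+m_J-1}$ they differ only through the Gram matrix of that basis, whose deviation from the identity is controlled directly by (\ref{eqn17})--(\ref{eqn18}). A triangle inequality then delivers the operator-norm estimate in the corollary.

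For the consequence $N_0=J+m_J-1$, suppose for contradiction that $N_0\geq J+m_J$, so that $\sigma_{J+m_J}^\varepsilon\in I$. Re-running the Gram--Schmidt construction and the proof of Lemma \ref{lemma4.3} on the additional vector $\eta_\varepsilon\phi_{J+m_J}^\varepsilon$ (whose only required inputs are membership of $\sigma_{J+m_J}^\varepsilon$ in $I$ and the $L^{\widetilde p}$--bound on $\nabla\phi_{J+m_J}^\varepsilon$ supplied by Proposition \ref{prop3}) produces an extra orthonormal $\hat f_{J+m_J}\in L^2(\Omega_0)$ satisfying the same $\|\Pi_{I^c}(\hat f_{J+m_J})\|_1\leq C\varepsilon^{d(\widetilde p-2)/(4\widetilde p)}/M$. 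This exhibits $m_J+1$ orthonormal vectors each at distance $O(\varepsilon^{d(\widetilde p-2)/(4\widetilde p)}/M)$ from the $m_J$-dimensional space $E(I)$, which fails once $\varepsilon$ drops below a threshold $\varepsilon(J)$ depending only on $m_J$, $\sigma_J^0$, $\sigma_{J-1}^0$, and $\sigma_{J+m_J}^0$.

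The step I expect to be the main obstacle is the replacement of $P_{\hat E}$ by $J_0\Pi_E J_\varepsilon$ in the second paragraph: $J_0\Pi_E J_\varepsilon$ is only a compression of a projector, not itself a projector, and keeping the rate $\varepsilon^{d(\widetilde p-2)/(4\widetilde p)}/M$ while unwinding the non-identity Gram matrix of $\{\eta_\varepsilon\phi_k^\varepsilon\}$ is the delicate bookkeeping. The dimension-counting contradiction for $N_0$ is then a clean consequence of the same ingredients already used to prove Lemma \ref{lemma4.3}.
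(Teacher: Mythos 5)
Your proof is correct and follows essentially the same route as the paper's: normalize the Gram--Schmidt family, invoke part 2 of Lemma \ref{lemma1}, and rule out $N_0>J+m_J-1$ by a dimension count (the paper phrases this as two compressed projectors of different ranks being too close, which is the same contradiction). The one point where you are more careful than the paper is the passage from the orthogonal projector onto $\hat{E}$ to the compression $J_0\Pi_E J_{\varepsilon}$, which the paper silently identifies; your Gram-matrix bookkeeping (together with the small correction coming from the $(1-\eta_{\varepsilon})$ factor in the pairing $\langle J_{\varepsilon}f,\phi_k\rangle$ versus $\langle f,\eta_{\varepsilon}\phi_k\rangle$) closes that gap at the same rate.
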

\begin{proof}
Again, we first show for $\varepsilon=\varepsilon(N)$. Normalize the $f_k$'s and observe that
$\displaystyle\frac{1}{\|f_k\|}\leq
\frac{1}{1-C\varepsilon(N)^{\frac{d(\widetilde{p}-2)}{2\widetilde{p}}}}$.
Then apply Lemma \ref{lemma1} (part 2) to the normalized functions. Then for general $\varepsilon \leq \varepsilon(N)$, we note that since Lemma \ref{lemma4.3} is true with a uniform $M$, we obtain $\displaystyle\|\Pi_I-J_0\Pi_E
J_{\varepsilon}\|_{{\cal L}\{L^2(\Omega_0)\}}\leq
\frac{C\varepsilon^{\frac{d(\widetilde{p}-2)}{4\widetilde{p}}}}{M}$. We next note that if $N_0>J+m_J-1$ for all $\varepsilon \leq \varepsilon(N)$, then we may find another projector $\Pi_A$ so that $\displaystyle\|\Pi_I-J_0\Pi_A
J_{\varepsilon}\|_{{\cal L}\{L^2(\Omega_0)\}}\leq
\frac{C\varepsilon^{\frac{d(\widetilde{p}-2)}{4\widetilde{p}}}}{M}$. But this would mean $\displaystyle\|J_0\Pi_EJ_{\varepsilon}-J_0\Pi_A
J_{\varepsilon}\|_{{\cal L}\{L^2(\Omega_0)\}}\leq
\frac{C\varepsilon^{\frac{d(\widetilde{p}-2)}{4\widetilde{p}}}}{M}$. Therefore, for some $\varepsilon(J)$, $N_0=J+m_J-1$ when $\varepsilon \leq \varepsilon(J)$.
\end{proof}

\vspace{.5in}

\begin{proof}[Proof of Theorem \ref{thm1}]
We first prove for $J=1$. By Corollary \ref{cor1}, for $\varepsilon \leq \varepsilon(1)$, we obtain $m_1=N_0$. This implies
that $|\sigma_k^{\varepsilon}-\sigma_k^0|\leq C\varepsilon^{\frac{d(\widetilde{p}-2)}{4\widetilde{p}}}$ only for $k$, $1\leq k\leq m_1$,
and hence, the result for $J=1$.   The result for $J=1$ implies that
not only may we choose $M$ so that all eigenvalues
$\{\sigma_k^{\varepsilon}\}_{k=m_1+1}^{m_1+m_2}$ are in the interval
corresponding to the next highest eigenvalue $\sigma_{m_1+1}^{0}$,
but also that $\sigma_1^{0}$ is not in this interval. Thus, we apply
the same reasoning here to get the result for $\sigma_{m_1+1}^{0}$.
Then, by an induction argument, we get the result for each $J\in
\mathbb{N}$, satisfying $\sigma_J^{0}>\sigma_{J-1}^{0}$. We note here that since C depends on $\varepsilon(J)$, it depends on the multiplicity $J$.
\end{proof}

We note that this paper introduces the use of $L^p$-estimates obtained by the reverse H\"older technique to the study of spectral problems for elliptic operators. Thus, this technique may be useful in studying spectral problems in situations where we do not know if higher regularity of solutions is true. We close by listing some open problems.

\begin{itemize}
\item If we have some additional regularity on the domain, can we use the methods
from this work to get convergence of Neumann eigenvalues for general
elliptic systems?
\item For elliptic systems on a symmetric dumbbell region with a straight tube, can we achieve upper and lower
bounds on the splitting between the smallest eigenvalues?
\item Can we investigate this problem further to see if a better
rate of convergence exists?
\end{itemize}
\vspace{1in}

\noindent \textbf{Acknowledgments:} The author thanks Russell Brown
for his valuable discussions and suggestions. The author also thanks the referee for his or her helpful comments.
\newpage

\bibliographystyle{model1-num-names}
\bibliography{sample}
\today
\end{document}